\newtheorem{definition}{Definition}[section]
\newtheorem{proposition}[definition]{Proposition}
\newtheorem{corollary}[definition]{Corollary}
\newtheorem{remark}[definition]{Remark}
\newtheorem{theorem}[definition]{Theorem}
\newcommand{\mfa}{\mbox{$\mf {a}$}}
\def\rawo\lonra{\longrightarrow}
\def\ot{\otimes}
\def\hot{\hat{\otimes }}
\def\tot{\tilde{\otimes }}
\def\mfa{\mathfrak{a}}
\newcommand{\selabel}[1]{\label{se:#1}}
\newenvironment{proof}{{\it Proof.}}{\hfill $ \square $ \vskip 4mm}
\begin{document}
\title{Yetter-Drinfeld modules for Hom-bialgebras}
\author{Abdenacer Makhlouf\\
Universit\'{e} de Haute Alsace, \\
Laboratoire de Math\'{e}matiques, Informatique et Applications, \\
4, rue des fr\`{e}res Lumi\`{e}re, F-68093 Mulhouse, France\\
e-mail: Abdenacer.Makhlouf@uha.fr
\and Florin Panaite\thanks {Work supported by a grant of the Romanian National 
Authority for Scientific Research, CNCS-UEFISCDI, 
project number PN-II-ID-PCE-2011-3-0635,  
contract nr. 253/5.10.2011.}\\
Institute of Mathematics of the
Romanian Academy\\
PO-Box 1-764, RO-014700 Bucharest, Romania\\
 e-mail: Florin.Panaite@imar.ro}
\date{}
\maketitle

\begin{abstract}
The aim of this paper is to define and study Yetter-Drinfeld modules over Hom-bialgebras, a 
generalized version of bialgebras obtained by modifying  the algebra and coalgebra structures  by a 
homomorphism. Yetter-Drinfeld modules over a Hom-bialgebra with bijective structure map provide 
solutions of the Hom-Yang-Baxter equation. The category $_H^H{\mathcal YD}$ of Yetter-Drinfeld modules 
with bijective structure maps over a Hom-bialgebra $H$ with bijective structure map can be organized, in two 
different ways, as a quasi-braided pre-tensor category. If $H$ is quasitriangular (respectively coquasitriangular) 
the first (respectively second) quasi-braided pre-tensor category $_H^H{\mathcal YD}$ contains, as a 
quasi-braided pre-tensor subcategory, the category of modules (respectively comodules) with bijective 
structure maps over $H$.
\end{abstract}
%%%%%%%%%%%%%%%%%%%%%%%%%%%%%%
\section*{Introduction}
%%%%%%%%%%%%%%%%%%%%%%%%%%%%%%
${\;\;\;}$The first examples of Hom-type algebras were related to $q$-deformations of Witt and Virasoro 
algebras,
 which play an important r\^{o}le in Physics, mainly in conformal field theory. In a theory with 
conformal symmetry, 
the Witt algebra $W$ is a part of the complexified Lie algebra 
$Vect^\mathbb{C}(\mathbb{S}) \times Vect^\mathbb{C}(\mathbb{S})$, where $\mathbb{S}$ is the 
unit circle, belonging to the classical conformal symmetry. 
The central extensions of $W$ by $\mathbb{C }$ become important for the quantization process. 
The $q$-deformations of Witt and Virasoro algebras are obtained when  the  derivation is replaced by a 
$\sigma$-derivation. It was observed in the pioneering works   
\cite{AizawaSaito,ChaiElinPop,ChaiKuLukPopPresn,ChaiIsKuLuk,ChaiPopPres,
CurtrZachos1,DaskaloyannisGendefVir,Hu, Kassel1,LiuKeQin} that they are no longer  Lie algebras.  
Motivated by these
examples and their generalization, Hartwig, Larsson and Silvestrov in \cite{HLS,LS1,LS2,LS3} 
introduced the notion of  Hom-Lie algebra as a deformation of Lie algebras in which the Jacobi identity 
is twisted by a homomorphism. The  associative-type objects corresponding to Hom-Lie algebras, called 
 Hom-associative algebras, have been introduced in \cite{ms1}. Usual functors between the categories of 
Lie algebras and associative algebras have been extended to the Hom-setting. It was shown in  \cite{ms1} 
that a commutator of a Hom-associative algebra gives rise to a Hom-Lie algebra; the construction of the free 
Hom-associative algebra and the  enveloping algebra of a Hom-Lie algebra have been provided in 
\cite{Yau:EnvLieAlg}. Since then, Hom-analogues of various classical structures and results  have 
been introduced and discussed by many authors. For instance, representation theory,   cohomology and 
deformation theory  for Hom-associative algebras and Hom-Lie algebras have been developed in 
\cite{AEM,ms2, Sheng}. See also \cite{fgs, Gohr} for other properties of Hom-associative algebras.
All these generalizations coincide with the usual definitions when the structure map equals the identity.

 The dual concept of Hom-associative algebras, called Hom-coassociative coalgebras, as well as 
Hom-bialgebras and Hom-Hopf algebras, have been introduced in \cite{ms3,ms4} and also studied  in 
\cite{stef,yau2}.  As expected, the enveloping Hom-associative algebra of a Hom-Lie algebra is 
naturally a Hom-bialgebra. A twisted version of module algebras called module Hom-algebras has 
been studied in \cite{yau1}, 
where $q$-deformations of the $\mathfrak{sl}(2)$-action on the affine plane were provided. 
Objects admitting coactions by Hom-bialgebras have been studied first in \cite{yau2}. A matrix Hom-associative 
algebra was endowed with a Hom-bialgebra structure $H$ and examples of $H$-comodule Hom-algebra 
structures on the Hom-affine plane $\textbf{A}_2$ have been provided.   
 In \cite{yauhomyb1,yauhomyb2,yauhomyb3}, various generalizations of Yang-Baxter equations and 
related algebraic structures have been studied. D. Yau provided solutions of HYBE, a twisted 
version of the Yang-Baxter equation called the Hom-Yang-Baxter equation, from Hom-Lie algebras, 
quantum enveloping algebra of $\mathfrak{sl}(2)$, the Jones-Conway polynomial, 
Drinfeld's (co)quasitriangular bialgebras and Yetter-Drinfeld modules (over bialgebras). 
It was shown that solutions of 
HYBE can be extended to operators that satisfy the braid relations, which can then be 
used to construct representations of the braid group, in case an invertibility condition holds. 
Moreover, a generalization of the classical Yang-Baxter equation and its connection  to Hom-Lie bialgebras 
have been explored. See also \cite{BEM} for other results related to Hom-Lie bialgebras.   
In the series of papers \cite{homquantum1,homquantum2,homquantum3}, D. Yau  studied  Hom-type 
generalizations of (co)quasitriangular bialgebras, quantum groups and the quantum 
Yang-Baxter equation (QYBE). It was shown that quasitriangular and coquasitriangular Hom-bialgebras 
come equipped with a solution of the quantum Hom-Yang-Baxter equation (QHYBE) or 
operator quantum Hom-Yang-Baxter equation (OQHYBE). Examples of quasitriangular Hom-bialgebras 
have been 
given, including  Drinfeld's quantum enveloping algebra $\mathcal{U}_h(\mathfrak{g})$  of a semi-simple Lie 
algebra or a Kac-Moody algebra $\mathfrak{g}$ and anyonic quantum groups.
In \cite{Elhamdadi-Makhlouf}, Hom-quasi-bialgebras have been introduced and concepts like gauge 
transformation and Drinfeld twist  generalized. 
Moreover,  an example of a twisted quantum double was provided.  One of the main tool to construct 
examples is the ''twisting principle'' introduced by D. Yau for Hom-associative algebras and since then 
extended to various Hom-type algebras. It allows to construct a Hom-type algebra starting from a 
classical-type algebra and an algebra homomorphism.

The aim of this paper is to introduce and study Yetter-Drinfeld modules over a Hom-bialgebra $H$, as objects 
$(M, \alpha _M)$ such that $(M, \alpha _M)$ is both a left $H$-module and a left $H$-comodule and a 
certain compatibility condition between the two structures holds. This condition was chosen in 
such a way that if $(M, \alpha _M)$ is a left module over a quasitriangular Hom-bialgebra or a left comodule over a 
coquasitriangular Hom-bialgebra then $M$ becomes a Yetter-Drinfeld module over that Hom-bialgbra.  
We will denote by $_H^H{\mathcal YD}$ the category whose objects are Yetter-Drinfeld modules 
$(M, \alpha _M)$ with $\alpha _M$ bijective over a Hom-bialgebra $H$ with bijective structure map $\alpha _H$. 

The paper is organized as follows. In Section 1, we review the main definitions and properties of pre-tensor 
categories, Hom-associative algebras, Hom-bialgebras and related structures. In Section 2, we introduce 
Yetter-Drinfeld modules and discuss some elementary aspects. We extend the twisting principle to 
Yetter-Drinfeld modules over bialgebras and we show that Yetter-Drinfeld modules over a Hom-bialgebra 
with bijective structure map give rise to solutions of the HYBE. In Section 3, we prove that 
$_H^H{\mathcal YD}$ can be organized as a quasi-braided pre-tensor category (with nontrivial associators), 
for which the quasi-braiding satisfies the usual braid relation (besides the dodecagonal braid relation involving 
the associators). It turns out that, if $H$ is a quasitriangular Hom-bialgebra, the category of left $H$-modules 
with bijective structure maps is a quasi-braided pre-tensor subcategory of $_H^H{\mathcal YD}$. 
In Section 4, we find another quasi-braided pre-tensor category structure on $_H^H{\mathcal YD}$, 
with the property that if $H$ is a coquasitriangular Hom-bialgebra then $_H^H{\mathcal YD}$ contains the 
category of left $H$-comodules with bijective structure maps as a quasi-braided pre-tensor category. 
%%%%%%%%%%%%%%%%%%%%%%%%%%%%%%%
\section{Preliminaries}\selabel{1}
%%%%%%%%%%%%%%%%%%%%%%%%%%%%%%%
${\;\;\;}$
We work over a base field $k$. All algebras, linear spaces
etc. will be over $k$; unadorned $\ot $ means $\ot_k$. For a comultiplication 
$\Delta :C\rightarrow C\ot C$ on a vector space $C$ we use a 
Sweedler-type notation $\Delta (c)=c_1\ot c_2$, for $c\in C$. Unless 
otherwise specified, the (co)algebras ((co)associative or not) that will appear 
in what follows are {\em not} supposed to be (co)unital, and a multiplication 
$\mu :V\ot V\rightarrow V$ on a linear space $V$ is denoted by juxtaposition: 
$\mu (v\ot v')=vv'$. 

We recall now several concepts and results, fixing thus the terminology 
to be used in the rest of the paper. 
\begin{definition} (\cite{li}) A {\em pre-tensor category} is a category satisfying all the axioms 
of a tensor category in \cite{kassel} except for the fact that we do not require the existence of a 
unit object and of left and right unit constraints. If $({\mathcal C}, \otimes, a)$ is a pre-tensor 
category, a {\em quasi-braiding} $c$ in ${\mathcal C}$ is a family of natural morphisms 
$c_{V, W}:V\ot W\rightarrow W\ot V$ in ${\mathcal C}$ satisfying all the axioms of 
a braiding in \cite{kassel} except for the fact that we do not require $c_{V, W}$ to be isomorphisms; 
in this case, $({\mathcal C}, \otimes, a, c)$ is called a quasi-braided pre-tensor category. 
\end{definition}

Exactly as for usual braided categories, a quasi-braiding on a pre-tensor category satisfies the dodecagonal 
braid relation in \cite{kassel}, p. 317.  
\begin{definition} \label{defYD}
Let $H$ be a bialgebra and $M$ a linear space 
which is a left $H$-module with 
action $H\ot M\rightarrow M$, $h\ot m\mapsto h\cdot m$ and a 
left $H$-comodule with coaction $M\rightarrow H\ot M$, $m\mapsto m_{(-1)}\ot m_{(0)}$. 
Then $M$ is called a (left-left) Yetter-Drinfeld module over $H$  if the 
following compatibility condition holds, for all $h\in H$, $m\in M$:
\begin{eqnarray}
&&(h_1\cdot m)_{(-1)}h_2\ot (h_1\cdot m)_{(0)}=
h_1m_{(-1)}\ot h_2\cdot m_{(0)}. 
\label{asocYD}
\end{eqnarray}
\end{definition}

We summarize several definitions and properties about Hom-type structures. Since various authors use different terminology, 
some caution is necessary. In what follows, we use terminology as in Yau's paper \cite{yau1}, which is different 
from the original terminology in \cite{ms1}, \cite{ms2} (where no extra assumption on the linear map 
$\alpha $ is made) and also different from Yau's paper \cite{yau2}, where for instance the multiplicativity of the 
map $\alpha $ is emphasized by calling ''multiplicative Hom-associative algebra'' what we will 
call for simplicity  in what follows ''Hom-associative algebra''. 
\begin{definition}
(i) A {\em Hom-associative algebra} is a triple $(A, \mu , \alpha )$, in which $A$ is a linear space, 
$\alpha :A\rightarrow A$  and $\mu :A\ot A\rightarrow A$ are linear maps,  
with notation $\mu (a\ot a')=aa'$, satisfying the following conditions, for all $a, a', a''\in A$:
\begin{eqnarray*}
&&\alpha (aa')=\alpha (a)\alpha (a'), \;\;\;\;\;(multiplicativity)\\
&&\alpha (a)(a'a'')=(aa')\alpha (a''). \;\;\;\;\;(Hom-associativity)
\end{eqnarray*}
We call $\alpha $ the {\em structure map} of $A$. 

A morphism $f:(A, \mu _A , \alpha _A)\rightarrow (B, \mu _B , \alpha _B)$ of Hom-associative algebras 
is a linear map $f:A\rightarrow B$ such that $\alpha _B\circ f=f\circ \alpha _A$ and 
$f\circ \mu_A=\mu _B\circ (f\ot f)$. \\
(ii) A {\em Hom-coassociative coalgebra} is a triple $(C, \Delta, \alpha )$, in which $C$ is a linear 
space, $\alpha :C\rightarrow C$ and $\Delta :C\rightarrow C\ot C$ are linear maps, 
satisfying the following conditions:
\begin{eqnarray*}
&&(\alpha \ot \alpha )\circ \Delta =
\Delta \circ \alpha , \;\;\;\;\;(comultiplicativity)\\  
&&(\Delta \ot \alpha )\circ \Delta =
(\alpha \ot \Delta )\circ \Delta . \;\;\;\;\;(Hom-coassociativity)
\end{eqnarray*}

A morphism $g:(C, \Delta _C , \alpha _C)\rightarrow (D, \Delta _D , \alpha _D)$ of Hom-coassociative 
coalgebras  
is a linear map $g:C\rightarrow D$ such that $\alpha _D\circ g=g\circ \alpha _C$ and 
$(g\ot g)\circ \Delta _C=\Delta _D\circ g$.
\end{definition}
\begin{remark}
Assume that $(A, \mu _A , \alpha _A)$ and $(B, \mu _B, \alpha _B)$ are two Hom-associative algebras; then 
$(A\ot B, \mu _{A\ot B}, \alpha _A\ot \alpha _B)$ is a Hom-associative algebra (called the tensor 
product of $A$ and $B$), where $\mu _{A\ot B}$ is the usual multiplication: 
$(a\ot b)(a'\ot b')=aa'\ot bb'$. 
\end{remark}
\begin{definition} (\cite{ms3},\cite{yau1}, \cite{homquantum3}) 
(i) Let $(A, \mu _A , \alpha _A)$ be a Hom-associative algebra, $M$ a linear space and $\alpha _M:M
\rightarrow M$ a linear map. A {\em left $A$-module} structure on $(M, \alpha _M)$ consists of a linear map 
$A\ot M\rightarrow M$, $a\ot m\mapsto a\cdot m$, satisfying the conditions:
\begin{eqnarray}
&&\alpha _M(a\cdot m)=\alpha _A(a)\cdot \alpha _M(m), \label{hommod1}\\
&&\alpha _A(a)\cdot (a'\cdot m)=(aa')\cdot \alpha _M(m), \label{hommod2}
\end{eqnarray} 
for all $a, a'\in A$ and $m\in M$. If $(M, \alpha _M)$ and $(N, \alpha _N)$ are left $A$-modules (both 
$A$-actions denoted by $\cdot$),  
a morphism of left $A$-modules $f:M\rightarrow N$ is a linear map satisfying the conditions 
$\alpha _N\circ f=f\circ \alpha _M$  and $f(a\cdot m)=a\cdot f(m)$, for all $a\in A$ and $m\in M$. \\
(ii) Let $(C, \Delta _C , \alpha _C)$ be a Hom-coassociative coalgebra, $M$ a linear space and $\alpha _M:M
\rightarrow M$ a linear map. A {\em left $C$-comodule} structure on $(M, \alpha _M)$ consists of a linear map 
$\lambda :M\rightarrow C\ot M$ (usually denoted by $\lambda (m)=m_{(-1)}\ot m_{(0)}$)
satisfying the following conditions:
\begin{eqnarray}
&&(\alpha _C\ot \alpha _M)\circ \lambda =\lambda \circ \alpha _M,  
\label{leftcom1}\\
&&(\Delta _C\ot \alpha _M)\circ \lambda =(\alpha _C\ot \lambda )\circ \lambda . 
\label{leftcom2}
\end{eqnarray} 
If $(M, \alpha _M)$ and $(N, \alpha _N)$ are left $C$-comodules, with structures 
$\lambda _M:M\rightarrow C\ot M$ and $\lambda _N:N\rightarrow C\ot N$, 
a morphism of left $C$-comodules $g:M\rightarrow N$ is a linear map satisfying the conditions 
$\alpha _N\circ g=g\circ \alpha _M$  and $(id_C\ot g)\circ \lambda _M=\lambda _N\circ g$. 
\end{definition}

\begin{definition} (\cite{ms3}, \cite{ms4})
A {\em Hom-bialgebra} is a quadruple $(H, \mu , \Delta, \alpha )$, in which $(H, \mu , \alpha )$ is 
a Hom-associative algebra, $(H, \Delta , \alpha )$ is a Hom-coassociative coalgebra  
and moreover $\Delta $ is a morphism of Hom-associative algebras.  
\end{definition}

In other words, a Hom-bialgebra is a Hom-associative algebra $(H, \mu , \alpha )$ endowed with a 
linear map $\Delta :H\rightarrow H\ot H$, with notation $\Delta (h)=h_1\ot h_2$, such that the 
following conditions are satisfied, for all $h, h'\in H$: 
\begin{eqnarray}
&&\Delta (h_1)\ot \alpha (h_2)=\alpha (h_1)\ot \Delta (h_2),  \label{hombia1}\\
&&\Delta (hh')=h_1h'_1\ot h_2h'_2, \label{hombia2}\\
&&\Delta (\alpha (h))=\alpha (h_1)\ot \alpha (h_2). \label{hombia3}
\end{eqnarray}

The following result provides a way to construct examples of Hom-associative algebras, Hom-coassociative 
coalgebras or Hom-bialgebras. It is called the ''twisting principle'' or sometimes a composition method.
\begin{proposition} (\cite{ms4}, \cite{yau3})
(i) Let $(A, \mu )$ be an associative algebra and $\alpha :A\rightarrow A$ an algebra endomorphism. Define 
a new multiplication $\mu _{\alpha }:=\alpha \circ \mu :A\ot A\rightarrow A$. Then 
$(A, \mu _{\alpha }, \alpha )$ is a Hom-associative algebra, denoted by $A_{\alpha }$. \\
(ii) Let $(C, \Delta )$ be a coassociative coalgebra and $\alpha :C\rightarrow C$ a 
coalgebra endomorphism. Define 
a new comultiplication $\Delta _{\alpha }:=\Delta \circ \alpha :C\rightarrow C\ot C$. Then 
$(C, \Delta _{\alpha }, \alpha )$ is a Hom-coassociative coalgebra, denoted by $C_{\alpha }$. \\
(iii) Let $(H, \mu , \Delta )$ be a bialgebra and $\alpha :H\rightarrow H$ a bialgebra endomorphism. 
If we define $\mu _{\alpha }$ and $\Delta _{\alpha }$ as in (i) and (ii), then 
$H_{\alpha }=(H, \mu _{\alpha }, \Delta _{\alpha }, \alpha )$ is a Hom-bialgebra. 
\end{proposition}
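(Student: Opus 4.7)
The plan is to verify each of the Hom-type axioms by direct computation, exploiting two moves: pushing $\alpha$ through products via multiplicativity, $\alpha(xy)=\alpha(x)\alpha(y)$, and dually through coproducts via $(\alpha\otimes\alpha)\circ\Delta=\Delta\circ\alpha$. In both directions these moves reduce every identity, after collecting outer $\alpha$'s, to the underlying associativity or coassociativity of the untwisted structure, so nothing deep is needed beyond those two properties and the original bialgebra axioms.

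For (i), the multiplicativity condition $\alpha(\mu_\alpha(a\otimes a'))=\mu_\alpha(\alpha(a)\otimes\alpha(a'))$ unfolds to $\alpha^2(aa')=\alpha(\alpha(a)\alpha(a'))$, which is $\alpha$ being an algebra endomorphism. For Hom-associativity, I would compute
\begin{eqnarray*}
\alpha(a)\cdot_\alpha(a'\cdot_\alpha a'')&=&\alpha\bigl(\alpha(a)\cdot\alpha(a'a'')\bigr)=\alpha^2(a)\cdot\alpha^2(a'a'')=\alpha^2(a)\cdot\bigl(\alpha^2(a')\alpha^2(a'')\bigr)
\end{eqnarray*}
and likewise $(a\cdot_\alpha a')\cdot_\alpha\alpha(a'')=\bigl(\alpha^2(a)\alpha^2(a')\bigr)\cdot\alpha^2(a'')$; these agree by associativity of $\mu$. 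Part (ii) is the arrow-reversed dual: comultiplicativity of $\alpha$ with respect to $\Delta_\alpha$ reads $(\alpha\otimes\alpha)\Delta(\alpha(c))=\Delta(\alpha(\alpha(c)))$, immediate from the coalgebra endomorphism property, and Hom-coassociativity is obtained by applying $\Delta\circ\alpha$ twice and moving $\alpha$'s out of the tensor factors until ordinary coassociativity finishes the job.

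For (iii), parts (i) and (ii) already provide the Hom-associative and Hom-coassociative skeleton, so it remains to check (hombia2) and (hombia3). Condition (hombia3) is $\Delta_\alpha(\alpha(h))=\alpha(h_1)\otimes\alpha(h_2)$, where the Sweedler components refer to $\Delta_\alpha$; unwinding, this is $\Delta(\alpha^2(h))=(\alpha\otimes\alpha)\Delta(\alpha(h))$, one more application of $\alpha$ being a coalgebra endomorphism. For (hombia2) I would expand
\begin{eqnarray*}
\Delta_\alpha(h\cdot_\alpha h')&=&\Delta(\alpha^2(hh'))=\Delta\bigl(\alpha^2(h)\,\alpha^2(h')\bigr)=\Delta(\alpha^2(h))\,\Delta(\alpha^2(h')),
\end{eqnarray*}
using multiplicativity of $\alpha$ and that $\Delta$ is an algebra map for the untwisted $(H,\mu)$, then rewrite $\Delta(\alpha^2(h))=(\alpha\otimes\alpha)\Delta_\alpha(h)$ and likewise for $h'$; multiplying these two elements of $H\otimes H$ and pulling the outer $\alpha$'s back inside via multiplicativity yields $\alpha(h_1h_1')\otimes\alpha(h_2h_2')$, which is exactly the product in the tensor-product Hom-algebra of the Sweedler pieces of $\Delta_\alpha(h)$ and $\Delta_\alpha(h')$.

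The only real obstacle is bookkeeping: the twisted operations each insert an $\alpha$, so $\Delta_\alpha$ of a twisted product carries $\alpha^2$ inside $\Delta$, and these $\alpha$'s must be migrated across $\Delta$ in the correct order using $(\alpha\otimes\alpha)\circ\Delta=\Delta\circ\alpha$ so that the expression reassembles as a product in $H_\alpha\otimes H_\alpha$ rather than in $H\otimes H$. Apart from that combinatorial care, the computation is mechanical.
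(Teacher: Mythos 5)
Your proof is correct: each Hom-axiom for the twisted structure reduces, after pushing $\alpha$'s through via the (co)algebra endomorphism property, to ordinary (co)associativity or to the bialgebra compatibility of $(H,\mu,\Delta)$, and your computations for multiplicativity, Hom-(co)associativity, and conditions (\ref{hombia2})--(\ref{hombia3}) all check out (in the final step of (iii), both sides equal $\alpha^2(h_1h'_1)\otimes\alpha^2(h_2h'_2)$ in untwisted Sweedler notation). Note that the paper itself states this proposition without proof, citing \cite{ms4} and \cite{yau3}, so there is no in-paper argument to compare against; your direct verification is the standard one given in those references.
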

\begin{proposition} (\cite{homquantum3}) \label{tensprodmod}
Let $(H, \mu _H, \Delta _H, \alpha _H)$ be a Hom-bialgebra. \\
(i) If $(M, \alpha _M)$ and 
$(N, \alpha _N)$ are left $H$-modules, then $(M\ot N, \alpha _M\ot \alpha _N)$ is also a 
left $H$-module, with $H$-action defined by $H\ot (M\ot N)\rightarrow M\ot N$, $h\ot (m\ot n)\mapsto 
h\cdot (m\ot n):=h_1\cdot m\ot h_2\cdot n$. \\
(ii) If $(M, \alpha _M)$ and 
$(N, \alpha _N)$ are left $H$-comodules, with coactions denoted by 
$M\rightarrow H\ot M$, $m\mapsto m_{(-1)}\ot m_{(0)}$ and 
$N\rightarrow H\ot N$, $n\mapsto n_{(-1)}\ot n_{(0)}$, 
then $(M\ot N, \alpha _M\ot \alpha _N)$ is also a 
left $H$-comodule, with $H$-coaction $M\ot N\rightarrow H\ot (M\ot N)$, 
$m\ot n\mapsto m_{(-1)}n_{(-1)}\ot (m_{(0)}\ot n_{(0)})$. 
\end{proposition}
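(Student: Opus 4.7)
The plan is to verify axioms (\ref{hommod1}) and (\ref{hommod2}) for (i), and axioms (\ref{leftcom1}) and (\ref{leftcom2}) for (ii), by direct computation on elementary tensors $m\ot n$. The Hom-bialgebra axioms (\ref{hombia2}) and (\ref{hombia3}) provide the bridge between what happens on each tensor factor.

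For (i), multiplicativity is checked by applying $\alpha_M\ot \alpha_N$ to $h_1\cd m\ot h_2\cd n$; using (\ref{hommod1}) in each tensorand gives $\alpha_H(h_1)\cd \alpha_M(m)\ot \alpha_H(h_2)\cd \alpha_N(n)$, which by (\ref{hombia3}) is precisely $\alpha_H(h)\cd (\alpha_M(m)\ot \alpha_N(n))$. For the Hom-associativity axiom (\ref{hommod2}), expanding $\alpha_H(h)\cd (h'\cd (m\ot n))$ via (\ref{hombia3}) and then (\ref{hommod2}) on each factor yields $(h_1h'_1)\cd \alpha_M(m)\ot (h_2h'_2)\cd \alpha_N(n)$; invoking (\ref{hombia2}) identifies this with $(hh')\cd (\alpha_M(m)\ot \alpha_N(n))$, as required.

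For (ii), the verification is entirely dual. Condition (\ref{leftcom1}) follows from multiplicativity of $\alpha_H$ together with (\ref{leftcom1}) applied separately to $M$ and $N$. For Hom-coassociativity (\ref{leftcom2}) on $M\ot N$, the left-hand side on $m\ot n$ becomes, via (\ref{hombia2}),
\[ m_{(-1)1}\,n_{(-1)1}\ot m_{(-1)2}\,n_{(-1)2}\ot \alpha_M(m_{(0)})\ot \alpha_N(n_{(0)}), \]
while the right-hand side expands to $\alpha_H(m_{(-1)})\alpha_H(n_{(-1)})\ot m_{(0)(-1)}\,n_{(0)(-1)}\ot m_{(0)(0)}\ot n_{(0)(0)}$; applying (\ref{leftcom2}) on each of $M$ and $N$ converts the latter into the former.

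No step is genuinely difficult; the only real issue is keeping Sweedler indices straight across two tensor factors. It is worth flagging that axioms (\ref{hombia2}) and (\ref{hombia3}) play distinct roles: the former handles iterated (co)actions in which the product $hh'$ or $m_{(-1)}n_{(-1)}$ must itself be (co)multiplied, while the latter ensures that the twist by $\alpha_H$ respects the tensor product decomposition. Once this division of labor is recognized, both parts collapse into a handful of substitutions.
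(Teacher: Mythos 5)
Your proof is correct. Note that the paper itself gives no proof of this proposition --- it is recalled verbatim from \cite{homquantum3} --- and your direct verification of (\ref{hommod1})--(\ref{hommod2}) and (\ref{leftcom1})--(\ref{leftcom2}) on elementary tensors, using (\ref{hombia2}), (\ref{hombia3}) and the (co)multiplicativity of $\alpha_H$, is exactly the standard argument one finds there, with each substitution valid as stated.
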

\begin{definition} (\cite{homquantum3}) \label{defmodcomod}
(i) Let $(A, \mu _A)$ be an associative algebra, $\alpha _A:A\rightarrow A$ an 
algebra endomorphism, $M$ a left $A$-module with action $A\ot M\rightarrow M$, $a\ot m
\mapsto a\cdot m$, and $\alpha _M:M\rightarrow M$ a linear map satisfying the condition 
$\alpha _M(a\cdot m)=\alpha _A(a)\cdot \alpha _M(m)$, for all $a\in A$, $m\in M$. 
Then $(M, \alpha _M)$ becomes a left module over the Hom-associative algebra 
$A_{\alpha _A}$, with action  $A_{\alpha _A}\ot M\rightarrow M$, $a\ot m
\mapsto a\triangleright m:=\alpha _M(a\cdot m)=\alpha _A(a)\cdot \alpha _M(m)$. \\
(ii) Let $(C, \Delta _C)$ be a coassociative coalgebra, $\alpha _C:C\rightarrow C$ a 
coalgebra endomorphism, $M$ a left $C$-comodule with structure 
$M\rightarrow C\ot M$, $m\mapsto m_{(-1)}\ot m_{(0)}$, and 
$\alpha _M:M\rightarrow M$ a linear map satisfying the condition 
$\alpha _M(m)_{(-1)}\ot \alpha _M(m)_{(0)}=\alpha _C(m_{(-1)})\ot \alpha _M(m_{(0)})$, 
for all $m\in M$. Then $(M, \alpha _M)$ becomes a left comodule over the Hom-coassociative 
coalgebra $C_{\alpha _C}$, with coaction $M\rightarrow C_{\alpha _C}\ot M$, 
$m\mapsto m_{<-1>}\ot m_{<0>}:=\alpha _M(m)_{(-1)}\ot \alpha _M(m)_{(0)}=
\alpha _C(m_{(-1)})\ot \alpha _M(m_{(0)})$.
\end{definition}
\begin{definition} (\cite{homquantum1}, \cite{homquantum2}) 
Let $(H, \mu , \Delta, \alpha )$ be a Hom-bialgebra and $R\in H\ot H$ an element, with 
Sweedler-type notation $R=R^1\ot R^2=r^1\ot r^2$. We call $(H, \mu , \Delta, \alpha , R)$ 
a quasitriangular Hom-bialgebra if the following axioms are satisfied: 
\begin{eqnarray}
&&(\Delta \ot \alpha )(R)=\alpha (R^1)\ot \alpha (r^1)\ot R^2r^2, \label{homQT1} \\
&&(\alpha \ot \Delta )(R)=R^1r^1\ot \alpha (r^2)\ot \alpha (R^2), \label{homQT2} \\
&&\Delta ^{cop}(h)R=R\Delta (h), \label{homQT3}
\end{eqnarray}
for all $h\in H$, where we denoted as usual $\Delta ^{cop }(h)=h_2\ot h_1$. 
\end{definition}
\begin{definition} (\cite{homquantum2}) 
Let $(H, \mu , \Delta, \alpha )$ be a Hom-bialgebra and $\sigma : H\ot H\rightarrow k$ a linear map.
We call $(H, \mu , \Delta, \alpha , \sigma )$ 
a coquasitriangular Hom-bialgebra if, for all $x, y, z\in H$, we have:
\begin{eqnarray}
&&\sigma (xy\ot \alpha (z))=\sigma (\alpha (x)\ot z_1)
\sigma (\alpha (y)\ot z_2), \\
&&\sigma (\alpha (x)\ot yz)=\sigma (x_1\ot \alpha (z))\sigma (x_2\ot \alpha (y)), \\
&&y_1x_1\sigma (x_2\ot y_2)=\sigma (x_1\ot y_1)x_2y_2. 
\end{eqnarray}
\end{definition}
%%%%%%%%%%%%%%%%%%%%%%%%%%%%%%%%%%%%%%
\section{Yetter-Drinfeld modules}
%%%%%%%%%%%%%%%%%%%%%%%%%%%%%
\setcounter{equation}{0}
%%%%%%%%%%%%%%%%%%%%%%%%%%%%
${\;\;\;}$We introduce in this section the concept of Yetter-Drinfeld module over a Hom-bialgebra. 
We study the category of Yetter-Drinfeld modules for which the structure map is bijective and such 
that the Hom-bialgebra structure map is bijective as well.
\begin{definition}
Let $(H, \mu _H, \Delta _H, \alpha _H)$ be a Hom-bialgebra, $M$ a linear space and 
$\alpha _M:M\rightarrow M$ a linear map such that $(M, \alpha _M)$ is a left $H$-module with 
action $H\ot M\rightarrow M$, $h\ot m\mapsto h\cdot m$ and a 
left $H$-comodule with coaction $M\rightarrow H\ot M$, $m\mapsto m_{(-1)}\ot m_{(0)}$. 
Then $(M, \alpha _M)$ is called a (left-left) {\em Yetter-Drinfeld module} over $H$  if the 
following identity holds, for all $h\in H$, $m\in M$:
\begin{eqnarray}
&&(h_1\cdot m)_{(-1)}\alpha _H^2(h_2)\ot (h_1\cdot m)_{(0)}=
\alpha _H^2(h_1)\alpha _H(m_{(-1)})\ot \alpha _H(h_2)\cdot m_{(0)}. 
\label{homYD}
\end{eqnarray}
\end{definition}
\begin{definition}
Let $(H, \mu _H, \Delta _H, \alpha _H)$ be a Hom-bialgebra such that $\alpha _H$ is bijective. 
We denote by $_H^H{\mathcal YD}$ the category whose objects are Yetter-Drinfeld modules 
$(M, \alpha _M)$ over $H$, with $\alpha _M$ bijective; the morphisms in the category are morphisms 
of left $H$-modules and left $H$-comodules.
\end{definition}

The choice of the compatibility condition (\ref{homYD}) is motivated by the following result:
\begin{proposition}\label{motivYD}
Let $(H, \mu _H, \Delta _H)$ be a bialgebra, $\alpha _H:H\rightarrow H$ a bialgebra endomorphism, 
$M$ a Yetter-Drinfeld module over $H$ with notation as in Definition \ref{defYD}, 
$\alpha _M:M\rightarrow M$ a linear map satisfying the conditions in Proposition \ref{defmodcomod} 
(both in (i) and (ii)), so we can consider $(M, \alpha _M)$ both as a left $H_{\alpha _H}$-module 
with action $\triangleright $ and as a left $H_{\alpha _H}$-comodule with coaction 
$m\mapsto m_{<-1>}\ot m_{<0>}$, as in Proposition \ref{defmodcomod}. Then $(M, \alpha _M)$ with 
these structures is a Yetter-Drinfeld module over the Hom-bialgebra $H_{\alpha _H}$. 
\end{proposition}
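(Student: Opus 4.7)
The plan is to verify the Hom-Yetter-Drinfeld compatibility (\ref{homYD}) for $(M,\alpha_M)$ equipped with the twisted action $\triangleright$ and coaction $m\mapsto m_{<-1>}\ot m_{<0>}$ inside $H_{\alpha_H}$ by direct unfolding, reducing everything to the classical Yetter-Drinfeld identity on $H$ together with the two compatibility relations from \deref{defmodcomod}. The whole proof is essentially bookkeeping of powers of $\alpha_H$ and $\alpha_M$, with the only nontrivial input being the classical identity (\ref{asocYD}); no new idea is required beyond organizing the twists.

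Concretely, I would start by recording the four pieces of data in $H_{\alpha_H}$ that appear in (\ref{homYD}): the multiplication is $x\cdot_{\alpha_H}y=\alpha_H(xy)$, the comultiplication is $\Delta_{\alpha_H}(h)=\alpha_H(h_1)\ot\alpha_H(h_2)$ (using that $\alpha_H$ is a coalgebra map), and the action/coaction are $h\triangleright m=\alpha_H(h)\cdot\alpha_M(m)$ and $m_{<-1>}\ot m_{<0>}=\alpha_H(m_{(-1)})\ot\alpha_M(m_{(0)})$. Plugging these into the left-hand side of (\ref{homYD}) and setting $u:=\alpha_H^2(h_1)\cdot\alpha_M(m)$ (which equals $\alpha_H(h)_{[1]}\triangleright m$ after one simplification), the LHS collapses, after pulling $\alpha_H$ out of the multiplication in $H_{\alpha_H}$, to
\[
\alpha_H^{2}\bigl(u_{(-1)}\,\alpha_H^{2}(h_2)\bigr)\ot \alpha_M(u_{(0)}).
\]
A parallel unfolding of the right-hand side yields
\[
\alpha_H^{3}(\alpha_H(h_1)\,m_{(-1)})\ot \alpha_H^{3}(h_2)\cdot\alpha_M^{2}(m_{(0)}).
\]

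Now I would apply the classical Yetter-Drinfeld identity (\ref{asocYD}) to the element $\alpha_H^{2}(h)\in H$ and $\alpha_M(m)\in M$, using that $\alpha_H$ is a coalgebra endomorphism so that $\Delta(\alpha_H^{2}(h))=\alpha_H^{2}(h_1)\ot \alpha_H^{2}(h_2)$. This gives
\[
u_{(-1)}\,\alpha_H^{2}(h_2)\ot u_{(0)}=\alpha_H^{2}(h_1)\,\alpha_M(m)_{(-1)}\ot \alpha_H^{2}(h_2)\cdot \alpha_M(m)_{(0)}.
\]
Using the comodule twist $\alpha_M(m)_{(-1)}\ot \alpha_M(m)_{(0)}=\alpha_H(m_{(-1)})\ot \alpha_M(m_{(0)})$, the right side becomes $\alpha_H^{2}(h_1)\alpha_H(m_{(-1)})\ot \alpha_H^{2}(h_2)\cdot\alpha_M(m_{(0)})$. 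Applying $\alpha_H^{2}\ot\alpha_M$ to both sides and using that $\alpha_H^{k}$ is an algebra endomorphism (so $\alpha_H^{k}(xy)=\alpha_H^{k}(x)\alpha_H^{k}(y)$) and that $\alpha_M\circ(h\cdot-)=\alpha_H(h)\cdot\alpha_M(-)$ transforms the expressions on each side of my unfolded LHS/RHS into each other.

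The main (only) obstacle is the clerical one of keeping track of the twist exponents; choosing the shorthand $u$ as above and systematically converting $\cdot_{\alpha_H}$ into $\alpha_H\circ(\cdot)$ keeps the calculation compact. I would also point out explicitly at the end that, since $\alpha_H$ being a bialgebra morphism was used only through (a) being an algebra morphism and (b) commuting with $\Delta$, no hypothesis other than those already assumed in \deref{defmodcomod} is needed.
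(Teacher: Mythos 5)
Your proposal is correct and follows essentially the same route as the paper: both reduce the Hom-Yetter-Drinfeld condition (\ref{homYD}) in $H_{\alpha_H}$ to the classical identity (\ref{asocYD}) applied to the pair $(\alpha_H^2(h),\alpha_M(m))$, using multiplicativity and comultiplicativity of $\alpha_H$ together with the module/comodule compatibilities of Proposition \ref{defmodcomod}, and your unfolded forms of the two sides (with $u=\alpha_H^2(h_1)\cdot\alpha_M(m)$) match the intermediate expressions in the paper's single chain of equalities. The only difference is organizational — you unfold both sides and meet in the middle by applying $\alpha_H^2\ot\alpha_M$ to the twisted classical identity, whereas the paper rewrites the left side into the right side in one pass — which is not a substantive difference.
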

\begin{proof}
We only need to check the Yetter-Drinfeld compatibility condition (\ref{homYD}), which in this case reads
\begin{eqnarray*}
&&(h_{(1)}\triangleright m)_{<-1>}*\alpha _H^2(h_{(2)})\ot (h_{(1)}\triangleright m)_{<0>}=
\alpha _H^2(h_{(1)})*\alpha _H(m_{<-1>})\ot \alpha _H(h_{(2)})\triangleright m_{<0>}, 
\end{eqnarray*}
where we denoted by $h*h'=\alpha _H(hh')$ and $h_{(1)}\ot h_{(2)}=\alpha _H(h_1)\ot \alpha _H(h_2)$ 
the multiplication and comultiplication of $H_{\alpha _H}$. Now we compute:\\[2mm]
${\;\;\;\;}$$(h_{(1)}\triangleright m)_{<-1>}*\alpha _H^2(h_{(2)})\ot (h_{(1)}\triangleright m)_{<0>}$
\begin{eqnarray*}
&=&\alpha _H((\alpha _H(h_1)\triangleright m)_{<-1>}\alpha _H^3(h_2))\ot 
(\alpha _H(h_1)\triangleright m)_{<0>}\\
&=&\alpha _H(\alpha _H((\alpha _H(h_1)\triangleright m)_{(-1)})\alpha _H^3(h_2))\ot 
\alpha _M((\alpha _H(h_1)\triangleright m)_{(0)})\\
&=&\alpha _H^2((\alpha _H^2(h_1)\cdot \alpha _M(m))_{(-1)}\alpha _H^2(h_2))\ot 
\alpha _M((\alpha _H^2(h_1)\cdot \alpha _M(m))_{(0)})\\
&=&\alpha _H^2((\alpha _H^2(h)_1\cdot \alpha _M(m))_{(-1)}\alpha _H^2(h)_2)\ot 
\alpha _M((\alpha _H^2(h)_1\cdot \alpha _M(m))_{(0)})\\
&\overset{(\ref{asocYD})}{=}&\alpha _H^2(\alpha _H^2(h)_1\alpha _M(m)_{(-1)})\ot 
\alpha _M(\alpha _H^2(h)_2\cdot \alpha _M(m)_{(0)})\\
&=&\alpha _H^2(\alpha _H^2(h_1)m_{<-1>})\ot 
\alpha _M(\alpha _H^2(h_2)\cdot m_{<0>})\\
&=&\alpha _H(\alpha _H^2(\alpha _H(h_1))\alpha _H(m_{<-1>}))\ot 
\alpha _H(\alpha _H(h_2))\triangleright m_{<0>}\\
&=&\alpha _H(\alpha _H^2(h_{(1)})\alpha _H(m_{<-1>}))\ot 
\alpha _H(h_{(2)})\triangleright m_{<0>}\\
&=&\alpha _H^2(h_{(1)})*\alpha _H(m_{<-1>})\ot \alpha _H(h_{(2)})\triangleright m_{<0>},
\end{eqnarray*}
finishing the proof.
\end{proof}
\begin{proposition} \label{BYD}
Let $(H, \mu _H, \Delta _H, \alpha _H)$ be a Hom-bialgebra with $\alpha _H$ bijective, 
$(M, \alpha _M)$, $(N, \alpha _N)$ two Yetter-Drinfeld modules over $H$, with notation 
as above, and define the linear map 
\begin{eqnarray}
&&B_{M, N}:M\ot N\rightarrow N\ot M, \;\;\;B_{M, N}(m\ot n)=\alpha _H^{-1}(m_{(-1)})\cdot n
\ot m_{(0)}. \label{defB}
\end{eqnarray}
Then, we have $(\alpha _N\ot \alpha _M)\circ B_{M, N}=B_{M, N}\circ 
(\alpha _M\ot \alpha _N)$ and, 
if $(P, \alpha _P)$ is another Yetter-Drinfeld module over $H$, the maps $B_{-, -}$ 
satisfy the Hom-Yang-Baxter equation (HYBE):
\begin{eqnarray}
&&(\alpha _P\ot B_{M, N})\circ (B_{M, P}\ot \alpha _N)\circ (\alpha _M\ot B_{N, P})\nonumber \\
&&\;\;\;\;\;\;\;\;\;\;\;\;\;=(B_{N, P}\ot \alpha _M)\circ (\alpha _N\ot B_{M, P})
\circ (B_{M, N}\ot \alpha _P). \label{YDhombraid}
\end{eqnarray}
\end{proposition}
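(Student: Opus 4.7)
The first claim, $(\alpha_N\ot\alpha_M)\circ B_{M,N}=B_{M,N}\circ(\alpha_M\ot\alpha_N)$, follows by a direct check from the $\alpha$-compatibilities of the coaction (\ref{leftcom1}) and of the action (\ref{hommod1}); it also gives the cleaner alternative formula $B_{M,N}(\alpha_M(m)\ot\alpha_N(n))=m_{(-1)}\cdot\alpha_N(n)\ot\alpha_M(m_{(0)})$, which eliminates the $\alpha_H^{-1}$ in the computations below.

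For the Hom-Yang-Baxter equation I evaluate both sides on $m\ot n\ot p\in M\ot N\ot P$ and reduce them to a common normal form. On the left-hand side, the first two morphisms produce $m_{(-1)}\cdot(\alpha_H^{-1}(n_{(-1)})\cdot p)\ot\alpha_M(m_{(0)})\ot\alpha_N(n_{(0)})$. Hom-associativity (\ref{hommod2}), used in the form $h\cdot(h'\cdot p)=(\alpha_H^{-1}(h)h')\cdot\alpha_P(p)$, together with multiplicativity of $\alpha_H$, collapses the first slot to $\alpha_H^{-1}(m_{(-1)}n_{(-1)})\cdot\alpha_P(p)$, and the outer $(\alpha_P\ot B_{M,N})$ then yields
\[
(m_{(-1)}n_{(-1)})\cdot\alpha_P^{2}(p)\ot m_{(0)(-1)}\cdot\alpha_N(n_{(0)})\ot\alpha_M(m_{(0)(0)}).
\]
Hom-coassociativity (\ref{leftcom2}), in the form $m_{(-1)}\ot m_{(0)(-1)}\ot m_{(0)(0)}=\alpha_H^{-1}(m_{(-1)1})\ot m_{(-1)2}\ot\alpha_M(m_{(0)})$, rewrites this as the normal form
\[
(\alpha_H^{-1}(m_{(-1)1})\,n_{(-1)})\cdot\alpha_P^{2}(p)\ot m_{(-1)2}\cdot\alpha_N(n_{(0)})\ot\alpha_M^{2}(m_{(0)}).
\]

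On the right-hand side, after the first two stages one arrives at $m_{(-1)}\cdot\alpha_N(n)\ot\alpha_H^{-1}(m_{(0)(-1)})\cdot\alpha_P(p)\ot m_{(0)(0)}$. The final $(B_{N,P}\ot\alpha_M)$ together with one more use of Hom-associativity gives
\[
\alpha_H^{-2}\bigl((m_{(-1)}\cdot\alpha_N(n))_{(-1)}\,\alpha_H(m_{(0)(-1)})\bigr)\cdot\alpha_P^{2}(p)\ot(m_{(-1)}\cdot\alpha_N(n))_{(0)}\ot\alpha_M(m_{(0)(0)}).
\]
Using (\ref{leftcom2}) to rewrite $m_{(-1)}\ot m_{(0)(-1)}\ot m_{(0)(0)}$ as above, the critical step is to apply the Yetter-Drinfeld identity (\ref{homYD}) with $h=\alpha_H^{-1}(m_{(-1)})$ and module element $\alpha_N(n)$: by comultiplicativity, $h_1\ot h_2=\alpha_H^{-1}(m_{(-1)1})\ot\alpha_H^{-1}(m_{(-1)2})$ and $\alpha_H^{2}(h_2)=\alpha_H(m_{(-1)2})$, so (\ref{homYD}) transforms
\[
(\alpha_H^{-1}(m_{(-1)1})\cdot\alpha_N(n))_{(-1)}\,\alpha_H(m_{(-1)2})\ot(\alpha_H^{-1}(m_{(-1)1})\cdot\alpha_N(n))_{(0)}
\]
into $\alpha_H(m_{(-1)1})\alpha_H^{2}(n_{(-1)})\ot m_{(-1)2}\cdot\alpha_N(n_{(0)})$. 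A final application of multiplicativity of $\alpha_H$ absorbs the outer $\alpha_H^{-2}$ and recovers the normal form found for the LHS.

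The main obstacle is bookkeeping the $\alpha_H$-powers so as to apply (\ref{homYD}) at precisely the right twist: one has to pick $h=\alpha_H^{-1}(m_{(-1)})$, not $m_{(-1)}$ itself, so that the $\alpha_H^{2}(h_2)$ factor built into the Yetter-Drinfeld axiom exactly matches the $\alpha_H(m_{(-1)2})$ produced by Hom-coassociativity against the $\alpha_H^{-1}$ hidden in the definition of $B$. Once that alignment is fixed, the remainder is a straightforward combination of Hom-associativity, Hom-coassociativity, and multiplicativity of the structure map.
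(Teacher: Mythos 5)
Your proposal is correct and follows essentially the same route as the paper's proof: both evaluate the two sides of the HYBE, reduce them using (\ref{leftcom1}), (\ref{hommod1}), (\ref{hommod2}), Hom-coassociativity (\ref{leftcom2}) and comultiplicativity, and then apply the Yetter-Drinfeld axiom (\ref{homYD}) with $h=\alpha_H^{-1}(m_{(-1)})$, which is exactly the alignment the paper achieves via (\ref{hombia3}). The only (cosmetic) difference is that you push both sides to a common normal form in terms of $m_{(-1)_1}\ot m_{(-1)_2}$, whereas the paper rewrites the right-hand side back into the form $(m_{(-1)}n_{(-1)})\cdot\alpha_P^2(p)\ot m_{(0)_{(-1)}}\cdot\alpha_N(n_{(0)})\ot\alpha_M(m_{(0)_{(0)}})$ reached on the left.
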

\begin{proof} The condition $(\alpha _N\ot \alpha _M)\circ B_{M, N}=B_{M, N}\circ 
(\alpha _M\ot \alpha _N)$ is very easy to prove and is left to the reader. Now 
we compute:\\[2mm]
${\;\;\;\;}$$((\alpha _P\ot B_{M, N})\circ (B_{M, P}\ot \alpha _N)\circ (\alpha _M\ot B_{N, P}))
(m\ot n\ot p)$
\begin{eqnarray*}
&=&((\alpha _P\ot B_{M, N})\circ (B_{M, P}\ot \alpha _N))(\alpha _M(m)\ot 
\alpha _H^{-1}(n_{(-1)})\cdot p\ot n_{(0)})\\
&=&(\alpha _P\ot B_{M, N})(\alpha _H^{-1}(\alpha _M(m)_{(-1)})\cdot 
(\alpha _H^{-1}(n_{(-1)})\cdot p)\ot \alpha _M(m)_{(0)}\ot \alpha _N(n_{(0)}))\\
&\overset{(\ref{leftcom1})}{=}&(\alpha _P\ot B_{M, N})(m_{(-1)}\cdot 
(\alpha _H^{-1}(n_{(-1)})\cdot p)\ot \alpha _M(m_{(0)})\ot \alpha _N(n_{(0)}))\\
&\overset{(\ref{hommod2})}{=}&(\alpha _P\ot B_{M, N})(\alpha _H^{-1}(m_{(-1)}n_{(-1)})\cdot 
\alpha _P(p)\ot \alpha _M(m_{(0)})\ot \alpha _N(n_{(0)}))\\
&\overset{(\ref{hommod1})}{=}&(m_{(-1)}n_{(-1)})\cdot 
\alpha _P^2(p)\ot \alpha _H^{-1}(\alpha _M(m_{(0)})_{(-1)})\cdot \alpha _N(n_{(0)})\ot 
\alpha _M(m_{(0)})_{(0)}\\
&\overset{(\ref{leftcom1})}{=}&(m_{(-1)}n_{(-1)})\cdot 
\alpha _P^2(p)\ot m_{(0)_{(-1)}}\cdot \alpha _N(n_{(0)})\ot 
\alpha _M(m_{(0)_{(0)}}), 
\end{eqnarray*}
${\;\;\;\;}$$((B_{N, P}\ot \alpha _M)\circ (\alpha _N\ot B_{M, P})
\circ (B_{M, N}\ot \alpha _P))(m\ot n\ot p)$
\begin{eqnarray*}
&=&((B_{N, P}\ot \alpha _M)\circ (\alpha _N\ot B_{M, P}))(\alpha _H^{-1}(m_{(-1)})\cdot n
\ot m_{(0)}\ot \alpha _P(p))\\
&=&(B_{N, P}\ot \alpha _M)(\alpha _N(\alpha _H^{-1}(m_{(-1)})\cdot n)\ot 
\alpha _H^{-1}(m_{(0)_{(-1)}})\cdot \alpha _P(p)\ot m_{(0)_{(0)}})\\
&\overset{(\ref{hommod1})}{=}&(B_{N, P}\ot \alpha _M)(m_{(-1)}\cdot \alpha _N(n)\ot 
\alpha _H^{-1}(m_{(0)_{(-1)}})\cdot \alpha _P(p)\ot m_{(0)_{(0)}})\\
&=&\alpha _H^{-1}((m_{(-1)}\cdot \alpha _N(n))_{(-1)})\cdot 
(\alpha _H^{-1}(m_{(0)_{(-1)}})\cdot \alpha _P(p))\\
&&\ot (m_{(-1)}\cdot \alpha _N(n))_{(0)}
\ot \alpha _M(m_{(0)_{(0)}})\\
&\overset{(\ref{hommod2})}{=}&[\alpha _H^{-2}((m_{(-1)}\cdot \alpha _N(n))_{(-1)})
\alpha _H^{-1}(m_{(0)_{(-1)}})]\cdot \alpha _P^2(p)\\
&&\ot (m_{(-1)}\cdot \alpha _N(n))_{(0)}
\ot \alpha _M(m_{(0)_{(0)}})\\
&\overset{(\ref{leftcom2})}{=}&\alpha _H^{-2}((\alpha _H^{-1}(m_{(-1)_1})\cdot \alpha _N(n))_{(-1)}
\alpha _H(m_{(-1)_2}))\cdot \alpha _P^2(p)\\
&&\ot (\alpha _H^{-1}(m_{(-1)_1})\cdot \alpha _N(n))_{(0)}
\ot \alpha _M^2(m_{(0)})\\
&\overset{(\ref{hombia3})}{=}&\alpha _H^{-2}((\alpha _H^{-1}(m_{(-1)})_1\cdot \alpha _N(n))_{(-1)}
\alpha _H^2(\alpha _H^{-1}(m_{(-1)})_2))\cdot \alpha _P^2(p)\\
&&\ot (\alpha _H^{-1}(m_{(-1)})_1\cdot \alpha _N(n))_{(0)}
\ot \alpha _M^2(m_{(0)})\\
&\overset{(\ref{homYD})}{=}&\alpha _H^{-2}(\alpha _H^2(\alpha _H^{-1}(m_{(-1)})_1)
\alpha _H(\alpha _N(n)_{(-1)}))
\cdot \alpha _P^2(p)\\
&&\ot \alpha _H(\alpha _H^{-1}(m_{(-1)})_2)\cdot \alpha _N(n)_{(0)}
\ot \alpha _M^2(m_{(0)})\\
&\overset{(\ref{hombia3})}{=}&(\alpha _H^{-1}(m_{(-1)_1})
\alpha _H^{-1}(\alpha _N(n)_{(-1)}))
\cdot \alpha _P^2(p)\ot m_{(-1)_2}\cdot \alpha _N(n)_{(0)}
\ot \alpha _M^2(m_{(0)})\\
&\overset{(\ref{leftcom1}), \;(\ref{leftcom2})}{=}&(m_{(-1)}n_{(-1)})\cdot 
\alpha _P^2(p)\ot m_{(0)_{(-1)}}\cdot \alpha _N(n_{(0)})\ot 
\alpha _M(m_{(0)_{(0)}}), 
\end{eqnarray*}
and the two terms are obviously equal.
\end{proof}

Let now $(H, \mu _H, \Delta _H)$ be a bialgebra, $M$ a Yetter-Drinfeld module over $H$ with 
notation as in Definition \ref{defYD} and $\alpha _M:M\rightarrow M$ a morphism of 
left $H$-modules and left $H$-comodules. If we consider the map $\alpha _H:=id_H$, then one can 
easily see that the hypotheses of Proposition \ref{motivYD} are satisfied. So, $(M, \alpha _M)$ is 
a Yetter-Drinfeld module over the Hom-bialgebra $H_{id_H}$ (which is actually the bialgebra $H$). 
For this Yetter-Drinfeld module $(M, \alpha _M)$ we apply Proposition \ref{BYD}: it follows that 
the linear map $B:M\ot M\rightarrow M\ot M$, $B(m\ot m')=m_{(-1)}\cdot m'\ot m_{(0)}$, for 
$m, m'\in M$, satisfies the HYBE $(\alpha _M\ot B)\circ (B\ot \alpha _M)\circ 
(\alpha _M\ot B)=(B\ot \alpha _M)\circ (\alpha _M\ot B)\circ (B\ot \alpha _M)$. This is 
exactly the content of Theorem 4.1 in \cite{yauhomyb2}, which may thus be seen as a particular case 
of Proposition \ref{BYD}. 
%%%%%%%%%%%%%%%%%%%%%%%%%%%%%%%%%%%%%%
\section{The quasi-braided pre-tensor category ($_H^H{\mathcal YD}$, $\hot $, $a$, $c$)}
%%%%%%%%%%%%%%%%%%%%%%%%%%%%%
\setcounter{equation}{0}
%%%%%%%%%%%%%%%%%%%%%%%%%%%%
${\;\;\;}$We show, in this section,  that over a Hom-bialgebra with bijective structure map, the category of 
Yetter-Drinfeld modules with bijective structure maps is a quasi-braided pre-tensor category. 
It comes with solutions to the braid relation and the HYBE.
\begin{proposition} \label{deftensprod}
Let $(H, \mu _H, \Delta _H, \alpha _H)$ be a Hom-bialgebra with $\alpha _H$ bijective, 
$(M, \alpha _M)$, $(N, \alpha _N)$ two Yetter-Drinfeld modules over $H$, with notation 
as above, and define the linear maps
\begin{eqnarray*}
&&H\ot (M\ot N)\rightarrow M\ot N, \;\;\;h\ot (m\ot n)\mapsto h_1\cdot m\ot h_2\cdot n, \\
&&M\ot N\rightarrow H\ot (M\ot N), \;\;\;m\ot n\mapsto \alpha _H^{-2}(m_{(-1)}n_{(-1)})\ot 
(m_{(0)}\ot n_{(0)}). 
\end{eqnarray*}
Then $(M\ot N, \alpha _M\ot \alpha _N)$ with these structures becomes a Yetter-Drinfeld module 
over $H$, 
denoted in what follows by $M\hot N$. 
\end{proposition}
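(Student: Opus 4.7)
The plan is to verify three items: that the prescribed action makes $(M\ot N,\alpha_M\ot \alpha_N)$ a left $H$-module, that the prescribed coaction makes it a left $H$-comodule, and that the Yetter-Drinfeld compatibility (\ref{homYD}) for $M\hot N$ holds. The module axioms require no new work, since the action is exactly the one in Proposition \ref{tensprodmod}(i).

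For the comodule axioms, the coaction differs from the one in Proposition \ref{tensprodmod}(ii) only by an outer application of $\alpha_H^{-2}$ on the $H$-factor. Since $\alpha_H^{-2}$ commutes with $\alpha_H$ (by the multiplicativity of $\alpha_H$) and satisfies $\Delta_H\circ \alpha_H^{-2}=(\alpha_H^{-2}\ot \alpha_H^{-2})\circ \Delta_H$ (by the comultiplicativity of $\alpha_H$), post-composing a given $H$-coaction with $\alpha_H^{-2}$ on the $H$-factor again yields a $H$-coaction. This simultaneously handles both (\ref{leftcom1}) and (\ref{leftcom2}).

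The main obstacle is the Yetter-Drinfeld compatibility (\ref{homYD}) for $M\hot N$. Substituting the definitions, the identity to be proved reduces to
\begin{align*}
&\alpha_H^{-2}\bigl((h_{1_1}\cdot m)_{(-1)}\,(h_{1_2}\cdot n)_{(-1)}\bigr)\,\alpha_H^2(h_2)\ot (h_{1_1}\cdot m)_{(0)}\ot (h_{1_2}\cdot n)_{(0)}\\
&\qquad =\alpha_H^2(h_1)\,\alpha_H^{-1}(m_{(-1)}n_{(-1)})\ot \alpha_H(h_{2_1})\cdot m_{(0)}\ot \alpha_H(h_{2_2})\cdot n_{(0)}.
\end{align*}
The strategy is to use Hom-coassociativity (\ref{hombia1}) to convert $h_{1_1}\ot h_{1_2}\ot \alpha_H(h_2)$ into $\alpha_H(h_1)\ot h_{2_1}\ot h_{2_2}$, together with (\ref{hombia2}) and (\ref{hombia3}), so that the factor $\alpha_H^2(h_2)$ on the left is split into two pieces which can be migrated into the $M$-slot and the $N$-slot respectively. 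Once the expression is arranged this way, (\ref{homYD}) applied to $(M,\alpha_M)$ moves one piece past $(h_{1_1}\cdot m)_{(-1)}$, while (\ref{homYD}) applied to $(N,\alpha_N)$ moves the other past $(h_{1_2}\cdot n)_{(-1)}$. Routine bookkeeping using Hom-associativity (\ref{hommod2}), the multiplicativity of $\alpha_H$, and (\ref{leftcom1}) then assembles the right-hand side.

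The principal difficulty is tracking the various powers of $\alpha_H$ through this chain of substitutions: the exponent $-2$ in the coaction on $M\hot N$ has been tuned precisely so that these powers balance on both sides. No identity beyond those already collected in Section~1 together with (\ref{homYD}) itself is required.
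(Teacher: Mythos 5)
Your proof is correct, and on the central point --- verifying the compatibility (\ref{homYD}) for $M\hot N$ --- it follows essentially the same route as the paper: the identity you display is exactly the one the paper checks, and its computation is precisely your strategy (use (\ref{hombia1}) to trade $h_{1_1}\ot h_{1_2}\ot\alpha_H(h_2)$ for $\alpha_H(h_1)\ot h_{2_1}\ot h_{2_2}$, apply (\ref{homYD}) in the $N$-slot, convert back with (\ref{hombia1}), apply (\ref{homYD}) in the $M$-slot, and let the powers of $\alpha_H$ rebalance --- they do, exactly as you assert). Where you genuinely differ is the comodule axioms: the paper settles them by an omitted ``similar and straightforward computation'', whereas you observe that the coaction of $M\hot N$ is the coaction of Proposition \ref{tensprodmod}(ii) post-composed with $\alpha_H^{-2}$ on the $H$-factor, and that post-composition by any comultiplicative map commuting with $\alpha_H$ preserves (\ref{leftcom1}) and (\ref{leftcom2}). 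This structural argument is slicker than the direct check and isolates a reusable principle; it also makes clear that \emph{any} power of $\alpha_H$ would yield a comodule, so the exponent $-2$ is forced only by (\ref{homYD}), not by the comodule axioms. Two trivial mislabels, neither affecting correctness: the ``Hom-associativity'' needed in the bookkeeping is that of the multiplication $\mu_H$ (together with multiplicativity of $\alpha_H$), not the module axiom (\ref{hommod2}); and (\ref{hombia2}) and (\ref{leftcom1}) play no role in the compatibility computation, which needs only (\ref{hombia1}), (\ref{hombia3}), Hom-associativity of $\mu_H$ and multiplicativity of $\alpha_H$.
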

\begin{proof}
We know from Proposition \ref{tensprodmod} that $M\hot N$ is a left $H$-module. A similar 
and straightforward computation shows that  $M\hot N$ is also a left $H$-comodule. So 
we only have to prove the Yetter-Drinfeld compatibility condition (\ref{homYD}). We compute:\\[2mm]
${\;\;\;}$$(h_1\cdot (m\ot n))_{(-1)}\alpha _H^2(h_2)\ot (h_1\cdot (m\ot n))_{(0)}$
\begin{eqnarray*}
&=&((h_1)_1\cdot m\ot (h_1)_2\cdot n)_{(-1)}\alpha _H^2(h_2)\ot 
((h_1)_1\cdot m\ot (h_1)_2\cdot n)_{(0)}\\
&\overset{(\ref{hombia1})}{=}&(\alpha _H(h_1)\cdot m\ot (h_2)_1\cdot n)_{(-1)}
\alpha _H((h_2)_2)\ot 
(\alpha _H(h_1)\cdot m\ot (h_2)_1\cdot n)_{(0)}\\
&=&\alpha _H^{-2}((\alpha _H(h_1)\cdot m)_{(-1)}((h_2)_1\cdot n)_{(-1)})
\alpha _H((h_2)_2)\ot (\alpha _H(h_1)\cdot m)_{(0)}\ot ((h_2)_1\cdot n)_{(0)}\\
&=&\alpha _H^{-2}([(\alpha _H(h_1)\cdot m)_{(-1)}((h_2)_1\cdot n)_{(-1)}]
\alpha _H^3((h_2)_2))\ot (\alpha _H(h_1)\cdot m)_{(0)}\ot ((h_2)_1\cdot n)_{(0)}\\
&=&\alpha _H^{-2}(\alpha _H((\alpha _H(h_1)\cdot m)_{(-1)})[((h_2)_1\cdot n)_{(-1)}
\alpha _H^2((h_2)_2)])\ot (\alpha _H(h_1)\cdot m)_{(0)}\ot ((h_2)_1\cdot n)_{(0)}\\
&\overset{(\ref{homYD})}{=}&\alpha _H^{-2}(\alpha _H((\alpha _H(h_1)\cdot m)_{(-1)})
[\alpha _H^2((h_2)_1)\alpha _H(n_{(-1)})])\ot (\alpha _H(h_1)\cdot m)_{(0)}
\ot \alpha _H((h_2)_2)\cdot n_{(0)}\\
&=&\alpha _H^{-2}([(\alpha _H(h_1)\cdot m)_{(-1)}
\alpha _H^2((h_2)_1)]\alpha _H^2(n_{(-1)}))\ot (\alpha _H(h_1)\cdot m)_{(0)}
\ot \alpha _H((h_2)_2)\cdot n_{(0)}\\
&\overset{(\ref{hombia1})}{=}&\alpha _H^{-2}([((h_1)_1\cdot m)_{(-1)}
\alpha _H^2((h_1)_2)]\alpha _H^2(n_{(-1)}))\ot ((h_1)_1\cdot m)_{(0)}
\ot \alpha _H^2(h_2)\cdot n_{(0)}\\
&\overset{(\ref{homYD})}{=}&\alpha _H^{-2}([\alpha _H^2((h_1)_1)\alpha _H(m_{(-1)})]
\alpha _H^2(n_{(-1)}))\ot \alpha _H((h_1)_2)\cdot m_{(0)}
\ot \alpha _H^2(h_2)\cdot n_{(0)}\\
&=&\alpha _H^{-2}(\alpha _H^3((h_1)_1)\alpha _H(m_{(-1)}n_{(-1)}))
\ot \alpha _H((h_1)_2)\cdot m_{(0)}
\ot \alpha _H^2(h_2)\cdot n_{(0)}\\
&\overset{(\ref{hombia1})}{=}&\alpha _H^2(h_1)\alpha _H(\alpha _H^{-2}(m_{(-1)}n_{(-1)}))
\ot \alpha _H((h_2)_1)\cdot m_{(0)}
\ot \alpha _H((h_2)_2)\cdot n_{(0)}\\
&=&\alpha _H^2(h_1)\alpha _H((m\ot n)_{(-1)})\ot \alpha _H(h_2)\cdot (m\ot n)_{(0)},
\end{eqnarray*}
finishing the proof.
\end{proof}
\begin{proposition} \label{assoccon}
Let $(H, \mu _H, \Delta _H, \alpha _H)$ be a Hom-bialgebra such that $\alpha _H$ is bijective and 
assume that 
$(M, \alpha _M)$, $(N, \alpha _N)$, $(P, \alpha _P)$  are three Yetter-Drinfeld modules over $H$, 
with notation 
as above, such that $\alpha _M$, $\alpha _N$, $\alpha _P$ are bijective;  
define the linear map
\begin{eqnarray*}
&&a_{M, N, P}:(M\hot N)\hot P\rightarrow M\hot (N\hot P), \;\;\;
a_{M, N, P}((m\ot n)\ot p)=\alpha _M^{-1}(m)\ot (n\ot \alpha _P(p)).
\end{eqnarray*}
Then $a_{M, N, P}$ is an isomorphism of left $H$-modules and left $H$-comodules. 
\end{proposition}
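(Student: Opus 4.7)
The plan is to verify, in order, (a) that $a_{M,N,P}$ is a linear bijection intertwining the structure maps, (b) that it is $H$-linear, and (c) that it is $H$-colinear. Part (a) is essentially formal, part (b) is a short computation using Hom-coassociativity, and part (c) is the main computation, where Hom-associativity of $\mu_H$ together with multiplicativity of $\alpha_H$ do the real work.

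For (a), since $\alpha_M$ and $\alpha_P$ are bijective, $a_{M,N,P}$ has the obvious inverse $m\ot (n\ot p)\mapsto (\alpha_M(m)\ot n)\ot \alpha_P^{-1}(p)$, and a direct substitution shows that it commutes with the structure maps $(\alpha_M\ot \alpha_N)\ot \alpha_P$ and $\alpha_M\ot (\alpha_N\ot \alpha_P)$ on the two sides.

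For (b), I expand both $h\cdot a_{M,N,P}((m\ot n)\ot p)$ and $a_{M,N,P}(h\cdot((m\ot n)\ot p))$ using the tensor-product action from Proposition~\ref{deftensprod}. After applying $\alpha_M^{-1}(h_{(1)_1}\cdot m) = \alpha_H^{-1}(h_{(1)_1})\cdot\alpha_M^{-1}(m)$ and $\alpha_P(h_2\cdot p)=\alpha_H(h_2)\cdot\alpha_P(p)$, which come from (\ref{hommod1}), the two expressions agree provided $\alpha_H^{-1}(h_{(1)_1})\ot h_{(1)_2}\ot \alpha_H(h_2)=h_1\ot h_{(2)_1}\ot h_{(2)_2}$, and this is exactly Hom-coassociativity (\ref{hombia1}) with $\alpha_H^{-1}$ applied to the first factor.

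For (c), the bottleneck, I compute both $(\mathrm{id}_H\ot a_{M,N,P})$ of the coaction on $(M\hot N)\hot P$ and the coaction on $M\hot(N\hot P)$ applied to $a_{M,N,P}((m\ot n)\ot p)$. The outer tensor factor is tautologically the same in both cases, so everything reduces to equality of the $H$-coefficients. Using multiplicativity of $\alpha_H^{-2}$, the first simplifies to $[\alpha_H^{-4}(m_{(-1)})\alpha_H^{-4}(n_{(-1)})]\alpha_H^{-2}(p_{(-1)})$, while (\ref{leftcom1}), applied to $\alpha_M^{-1}(m)_{(-1)}=\alpha_H^{-1}(m_{(-1)})$ and $\alpha_P(p)_{(-1)}=\alpha_H(p_{(-1)})$, reduces the second to $\alpha_H^{-3}(m_{(-1)})\cdot[\alpha_H^{-4}(n_{(-1)})\alpha_H^{-3}(p_{(-1)})]$. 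These two expressions match exactly by one application of Hom-associativity $\alpha_H(a)(bc)=(ab)\alpha_H(c)$ with $a=\alpha_H^{-4}(m_{(-1)})$, $b=\alpha_H^{-4}(n_{(-1)})$, $c=\alpha_H^{-3}(p_{(-1)})$. The only real obstacle is bookkeeping the powers of $\alpha_H^{\pm 1}$ correctly; once this is done Hom-associativity closes the proof.
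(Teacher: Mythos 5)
Your proposal is correct and follows essentially the same route as the paper's proof: bijectivity and compatibility with the structure maps are treated as formal, $H$-linearity reduces via (\ref{hommod1}) to the identity $\alpha_H^{-1}((h_1)_1)\ot (h_1)_2\ot \alpha_H(h_2)=h_1\ot (h_2)_1\ot (h_2)_2$ coming from (\ref{hombia1}), and $H$-colinearity reduces via (\ref{leftcom1}) and multiplicativity to a single application of Hom-associativity. The only cosmetic difference is that you apply Hom-associativity directly to the elements $\alpha_H^{-4}(m_{(-1)})$, $\alpha_H^{-4}(n_{(-1)})$, $\alpha_H^{-3}(p_{(-1)})$, whereas the paper pulls everything inside $\alpha_H^{-4}$ and applies it to $m_{(-1)}, n_{(-1)}, p_{(-1)}$; these are equivalent since $\alpha_H$ is a bijective algebra map.
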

\begin{proof}
It is obvious that $a_{M, N, P}$ is bijective and satisfies the relation 
$(\alpha _M\ot \alpha _N\ot \alpha _P)\circ a_{M, N, P}=a_{M, N, P}\circ 
(\alpha _M\ot \alpha _N\ot \alpha _P)$. The $H$-linearity of $a_{M, N, P}$ follows from the 
computation performed in \cite{stef}, proof of Proposition 2.6, but we include a proof here for 
reader's convenience:
\begin{eqnarray*}
a_{M, N, P}(h\cdot ((m\ot n)\ot p))&=&
a_{M, N, P}(((h_1)_1\cdot m\ot (h_1)_2\cdot n)\ot h_2\cdot p)\\
&=&\alpha _M^{-1}((h_1)_1\cdot m)\ot ((h_1)_2\cdot n\ot \alpha _P(h_2\cdot p))\\
&\overset{(\ref{hommod1})}{=}&\alpha _H^{-1}((h_1)_1)\cdot \alpha _M^{-1}(m)
\ot ((h_1)_2\cdot n
\ot \alpha _H(h_2)\cdot \alpha _P(p))\\
&\overset{(\ref{hombia1})}{=}&h_1\cdot \alpha _M^{-1}(m)
\ot ((h_2)_1\cdot n
\ot (h_2)_2\cdot \alpha _P(p))\\
&=&h_1\cdot \alpha _M^{-1}(m)\ot h_2\cdot (n\ot \alpha _P(p))\\
&=&h\cdot a_{M, N, P}((m\ot n)\ot p), \;\;\;q.e.d.
\end{eqnarray*}
Now we prove the $H$-colinearity of $a_{M, N, P}$ (denoting by $\lambda _X$ the left 
$H$-comodule structure of a Yetter-Drinfeld module $X$):\\[2mm]
${\;\;\;}$$(id_H\ot a_{M, N, P})\circ \lambda _{(M\hot N)\hot P}((m\ot n)\ot p)$
\begin{eqnarray*}
&=&(id_H\ot a_{M, N, P})(\alpha _H^{-2}((m\ot n)_{(-1)}p_{(-1)})\ot 
(m\ot n)_{(0)}\ot p_{(0)})\\
&=&\alpha _H^{-2}(\alpha _H^{-2}(m_{(-1)}n_{(-1)})p_{(-1)})\ot 
a_{M, N, P}((m_{(0)}\ot n_{(0)})\ot p_{(0)})\\
&=&\alpha _H^{-4}(m_{(-1)}n_{(-1)})\alpha _H^{-2}(p_{(-1)})\ot 
\alpha _M^{-1}(m_{(0)})\ot (n_{(0)}\ot \alpha _P(p_{(0)})), 
\end{eqnarray*}
${\;\;\;}$$(\lambda _{M\hot (N\hot P)}\circ a_{M, N, P})((m\ot n)\ot p)$
\begin{eqnarray*}
&=&\lambda _{M\hot (N\hot P)}(\alpha _M^{-1}(m)\ot (n\ot \alpha _P(p)))\\
&=&\alpha _H^{-2}(\alpha _M^{-1}(m)_{(-1)}(n\ot \alpha _P(p))_{(-1)})\ot 
\alpha _M^{-1}(m)_{(0)}\ot (n\ot \alpha _P(p))_{(0)}\\
&=&\alpha _H^{-2}(\alpha _M^{-1}(m)_{(-1)}\alpha _H^{-2}(n_{(-1)}\alpha _P(p)_{(-1)}))\ot 
\alpha _M^{-1}(m)_{(0)}\ot (n_{(0)}\ot \alpha _P(p)_{(0)})\\
&\overset{(\ref{leftcom1})}{=}&\alpha _H^{-3}(m_{(-1)})\alpha _H^{-4}(n_{(-1)}\alpha _H(p_{(-1)}))\ot 
\alpha _M^{-1}(m_{(0)})\ot (n_{(0)}\ot \alpha _P(p_{(0)}))\\
&=&\alpha _H^{-4}(\alpha _H(m_{(-1)})[n_{(-1)}\alpha _H(p_{(-1)})])\ot 
\alpha _M^{-1}(m_{(0)})\ot (n_{(0)}\ot \alpha _P(p_{(0)}))\\
&=&\alpha _H^{-4}((m_{(-1)}n_{(-1)})\alpha _H^2(p_{(-1)}))\ot 
\alpha _M^{-1}(m_{(0)})\ot (n_{(0)}\ot \alpha _P(p_{(0)}))\\
&=&\alpha _H^{-4}(m_{(-1)}n_{(-1)})\alpha _H^{-2}(p_{(-1)})\ot 
\alpha _M^{-1}(m_{(0)})\ot (n_{(0)}\ot \alpha _P(p_{(0)})),
\end{eqnarray*}
and the two terms are obviously equal. 
\end{proof}
\begin{proposition} \label{defbraiding}
Let $(H, \mu _H, \Delta _H, \alpha _H)$ be a Hom-bialgebra such that $\alpha _H$ is bijective, let 
$(M, \alpha _M)$ and $(N, \alpha _N)$  be two Yetter-Drinfeld modules over $H$, 
with notation 
as above, such that $\alpha _M$ and $\alpha _N$ are bijective, 
and define the linear map
\begin{eqnarray}
&&c_{M, N}:M\hot N\rightarrow N\hot M, \;\;\;c_{M, N}(m\ot n)=
\alpha _N^{-1}(\alpha _H^{-1}(m_{(-1)})\cdot n)\ot \alpha _M^{-1}(m_{(0)}). \label{defc}
\end{eqnarray}
Then $c_{M, N}$ is a morphism of left $H$-modules and left $H$-comodules. 
\end{proposition}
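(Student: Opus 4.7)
The plan is to verify three properties of $c_{M, N}$: (a) the intertwining relation $(\alpha_N \ot \alpha_M)\circ c_{M, N} = c_{M, N}\circ (\alpha_M \ot \alpha_N)$, (b) left $H$-linearity with respect to the tensor-product $H$-module structures from Proposition \ref{deftensprod}, and (c) left $H$-colinearity with respect to the tensor-product $H$-comodule structures from the same proposition. The bijectivity of $\alpha_H$, $\alpha_M$, $\alpha_N$ (hence of $\alpha_M\ot\alpha_N$) is what makes the inverses appearing in the definition of $c_{M, N}$ well-defined. For (a), I apply $\alpha_N \ot \alpha_M$ to $c_{M, N}(m\ot n)$, push $\alpha_N$ inside the $H$-action via (\ref{hommod1}), and use $\alpha_H(m_{(-1)}) \ot \alpha_M(m_{(0)}) = \alpha_M(m)_{(-1)}\ot \alpha_M(m)_{(0)}$ from (\ref{leftcom1}); the two expressions agree after cancelling inverses.

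For (b), I expand $c_{M, N}(h\cdot (m\ot n)) = c_{M, N}(h_1\cdot m\ot h_2\cdot n)$ and apply $\alpha_N \ot \alpha_M$ to both sides to strip off the outer inverses; the first tensorand then reads $\alpha_H^{-1}((h_1\cdot m)_{(-1)})\cdot (h_2\cdot n)$. Using (\ref{hommod2}) to re-associate, it becomes $\alpha_H^{-2}((h_1\cdot m)_{(-1)}\alpha_H^2(h_2))\cdot \alpha_N(n)$, which, paired with the second tensorand $(h_1\cdot m)_{(0)}$, is the precise shape to which the Yetter-Drinfeld condition (\ref{homYD}) applies. Replacing by the right-hand side of (\ref{homYD}) produces $\alpha_H^{-2}(\alpha_H^2(h_1)\alpha_H(m_{(-1)}))\cdot \alpha_N(n) \ot \alpha_H(h_2)\cdot m_{(0)}$; multiplicativity of $\alpha_H$ and one further use of (\ref{hommod2}) turn this into $\alpha_N \ot \alpha_M$ applied to $h_1\cdot \alpha_N^{-1}(\alpha_H^{-1}(m_{(-1)})\cdot n) \ot h_2\cdot \alpha_M^{-1}(m_{(0)}) = h\cdot c_{M, N}(m\ot n)$.

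For (c), I compute $(id_H\ot c_{M, N})(\lambda_{M\hot N}(m\ot n))$ directly from the coaction formula of Proposition \ref{deftensprod}, obtaining $\alpha_H^{-2}(m_{(-1)}n_{(-1)})\ot \alpha_N^{-1}(\alpha_H^{-1}(m_{(0)(-1)})\cdot n_{(0)})\ot \alpha_M^{-1}(m_{(0)(0)})$. For the other side $\lambda_{N\hot M}(c_{M, N}(m\ot n))$, the coactions of $\alpha_N^{-1}(\alpha_H^{-1}(m_{(-1)})\cdot n)$ and $\alpha_M^{-1}(m_{(0)})$ both simplify via (\ref{leftcom1}). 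I then invoke (\ref{homYD}) with $h = \alpha_H^{-2}(m_{(-1)})$, whose coproduct is $\alpha_H^{-2}(m_{(-1)_1})\ot \alpha_H^{-2}(m_{(-1)_2})$ by (\ref{hombia3}), to rewrite the factor $(\alpha_H^{-1}(m_{(-1)})\cdot n)_{(-1)}\ot (\alpha_H^{-1}(m_{(-1)})\cdot n)_{(0)}$; finally, Hom-coassociativity (\ref{leftcom2}), in the form that identifies the triple $(\alpha_H^{-1}(m_{(-1)_1}), m_{(-1)_2}, \alpha_M(m_{(0)}))$ with $(m_{(-1)}, m_{(0)(-1)}, m_{(0)(0)})$, produces the desired match.

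The main obstacle I expect is (c): it requires combining (\ref{homYD}) and (\ref{leftcom2}) in the correct order, while keeping careful track of the many $\alpha_H$-powers arising from the coactions, the inverses in the definition of $c_{M, N}$, and the $\alpha_H^{-2}$ appearing in the tensor-product coaction formula. The $H$-linearity (b), by contrast, requires only a single invocation of the Yetter-Drinfeld condition in essentially the form in which it is stated, after one re-association via (\ref{hommod2}).
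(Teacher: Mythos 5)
Your parts (a) and (b) are correct and essentially reproduce the paper's own argument: for $H$-linearity one expands $c_{M,N}(h_1\cdot m\ot h_2\cdot n)$, re-associates via (\ref{hommod2}) so that the combination $(h_1\cdot m)_{(-1)}\alpha_H^2(h_2)\ot (h_1\cdot m)_{(0)}$ appears, applies (\ref{homYD}) once, and undoes the re-association; the only (trivial) omission is that your final step also needs (\ref{hommod1}), not just multiplicativity and (\ref{hommod2}), to recognize the result as $(\alpha_N\ot\alpha_M)(h\cdot c_{M,N}(m\ot n))$.

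The problem is in part (c), and it is an ordering issue that makes your key step fail as stated. You propose the sequence (\ref{leftcom1}), then (\ref{homYD}), then (\ref{leftcom2}). But the Yetter-Drinfeld axiom (\ref{homYD}) does not rewrite a factor of the form $(x\cdot n)_{(-1)}\ot (x\cdot n)_{(0)}$ in isolation: it only rewrites the specific combination $(h_1\cdot n)_{(-1)}\alpha_H^2(h_2)\ot (h_1\cdot n)_{(0)}$ in which \emph{both} legs of a single coproduct occur in the prescribed positions. After your (\ref{leftcom1}) step, the side you are transforming reads
\[
\alpha_H^{-3}\bigl((\alpha_H^{-1}(m_{(-1)})\cdot n)_{(-1)}\, m_{(0)_{(-1)}}\bigr)\ot
\alpha_N^{-1}\bigl((\alpha_H^{-1}(m_{(-1)})\cdot n)_{(0)}\bigr)\ot \alpha_M^{-1}(m_{(0)_{(0)}}),
\]
and here the acting element $\alpha_H^{-1}(m_{(-1)})$ and the multiplying element $m_{(0)_{(-1)}}$ are \emph{not} the legs $h_1$ and $\alpha_H^2(h_2)$ of any coproduct, so (\ref{homYD}) cannot be invoked. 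They become such legs only after the Hom-coassociativity (\ref{leftcom2}) that you postpone to the end is applied \emph{first}: it replaces the triple $\bigl(\alpha_H^{-1}(m_{(-1)}),\, m_{(0)_{(-1)}},\, \alpha_M^{-1}(m_{(0)_{(0)}})\bigr)$ by $\bigl(\alpha_H^{-2}(m_{(-1)_1}),\, m_{(-1)_2},\, m_{(0)}\bigr)$, which by (\ref{hombia3}) equals $\bigl(h_1,\, \alpha_H^2(h_2),\, m_{(0)}\bigr)$ for $h=\alpha_H^{-2}(m_{(-1)})$. Moreover, (\ref{leftcom2}) is needed \emph{twice}: once before (\ref{homYD}) to create this configuration, and once after it (equivalently, applied to the other side, which is your "triple identification") to return to the iterated-coaction form $\alpha_H^{-2}(m_{(-1)}n_{(-1)})\ot \alpha_N^{-1}(\alpha_H^{-1}(m_{(0)_{(-1)}})\cdot n_{(0)})\ot \alpha_M^{-1}(m_{(0)_{(0)}})$. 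This is exactly the paper's sequence: (\ref{leftcom1}), (\ref{leftcom2}), (\ref{hombia3}), (\ref{homYD}), (\ref{hombia3}), (\ref{leftcom2}). All of your ingredients --- the choice $h=\alpha_H^{-2}(m_{(-1)})$, the coproduct computation via (\ref{hombia3}), the correct form of the triple identification --- are right, but the assembly you describe stalls precisely at the central step, and the repair is the reordering above.
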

\begin{proof}
The relation $(\alpha _N\ot \alpha _M)\circ c_{M, N}=c_{M, N}\circ (\alpha _M\ot \alpha _N)$ follows by 
an easy computation using (\ref{leftcom1}) and (\ref{hommod1}). We prove now the $H$-linearity of 
$c_{M, N}$: 
\begin{eqnarray*}
c_{M, N}(h\cdot (m\ot n))&=&c_{M, N}(h_1\cdot m\ot h_2\cdot n)\\
&=&\alpha _N^{-1}(\alpha _H^{-1}((h_1\cdot m)_{(-1)})\cdot (h_2\cdot n))\ot 
\alpha _M^{-1}((h_1\cdot m)_{(0)})\\
&\overset{(\ref{hommod2})}{=}&\alpha _N^{-1}([\alpha _H^{-2}((h_1\cdot m)_{(-1)})h_2]\cdot 
\alpha _N(n))\ot 
\alpha _M^{-1}((h_1\cdot m)_{(0)})\\
&=&\alpha _N^{-1}(\alpha _H^{-2}((h_1\cdot m)_{(-1)}\alpha _H^2(h_2))\cdot 
\alpha _N(n))\ot 
\alpha _M^{-1}((h_1\cdot m)_{(0)})\\
&\overset{(\ref{homYD})}{=}&\alpha _N^{-1}(\alpha _H^{-2}(\alpha _H^2(h_1)\alpha _H(m_{(-1)}))\cdot 
\alpha _N(n))\ot 
\alpha _M^{-1}(\alpha _H(h_2)\cdot m_{(0)})\\
&\overset{(\ref{hommod2})}{=}&\alpha _N^{-1}(\alpha _H(h_1)\cdot (\alpha _H^{-1}(m_{(-1)})\cdot n))\ot 
\alpha _M^{-1}(\alpha _H(h_2)\cdot m_{(0)})\\
&\overset{(\ref{hommod1})}{=}&h_1\cdot \alpha _N^{-1}(\alpha _H^{-1}(m_{(-1)})\cdot n)\ot 
h_2\cdot \alpha _M^{-1}(m_{(0)})\\
&=&h\cdot c_{M, N}(m\ot n), \;\;\;q.e.d.
\end{eqnarray*}
Now we prove the $H$-colinearity of $c_{M, N}$ (we denote by $\lambda _{M\hot N}$ and 
$\lambda _{N\hot M}$ the 
left $H$-comodule structures of $M\hot N$ and respectively $N\hot M$): \\[2mm]
${\;\;\;}$$(\lambda _{N\hot M}\circ c_{M, N})(m\ot n)$
\begin{eqnarray*}
&=&\lambda _{N\hot M}(\alpha _N^{-1}(\alpha _H^{-1}(m_{(-1)})\cdot n)\ot \alpha _M^{-1}(m_{(0)}))\\
&=&\alpha _H^{-2}(\alpha _N^{-1}(\alpha _H^{-1}(m_{(-1)})\cdot n)_{(-1)}\alpha _M^{-1}(m_{(0)})_{(-1)})
\ot \alpha _N^{-1}(\alpha _H^{-1}(m_{(-1)})\cdot n)_{(0)}\ot \alpha _M^{-1}(m_{(0)})_{(0)}\\
&\overset{(\ref{leftcom1})}{=}&\alpha _H^{-2}(\alpha _N^{-1}(\alpha _H^{-1}(m_{(-1)})\cdot n)_{(-1)}
\alpha _H^{-1}(m_{(0)_{(-1)}}))
\ot \alpha _N^{-1}(\alpha _H^{-1}(m_{(-1)})\cdot n)_{(0)}\ot \alpha _M^{-1}(m_{(0)_{(0)}})\\
&\overset{(\ref{leftcom2})}{=}&\alpha _H^{-2}(\alpha _N^{-1}(\alpha _H^{-2}(m_{(-1)_1})\cdot n)_{(-1)}
\alpha _H^{-1}(m_{(-1)_2}))
\ot \alpha _N^{-1}(\alpha _H^{-2}(m_{(-1)_1})\cdot n)_{(0)}\ot m_{(0)}\\
&\overset{(\ref{leftcom1})}{=}&\alpha _H^{-3}((\alpha _H^{-2}(m_{(-1)_1})\cdot n)_{(-1)}
m_{(-1)_2})
\ot \alpha _N^{-1}((\alpha _H^{-2}(m_{(-1)_1})\cdot n)_{(0)})\ot m_{(0)}\\
&=&\alpha _H^{-3}((\alpha _H^{-2}(m_{(-1)_1})\cdot n)_{(-1)}
\alpha _H^2(\alpha _H^{-2}(m_{(-1)_2})))
\ot \alpha _N^{-1}((\alpha _H^{-2}(m_{(-1)_1})\cdot n)_{(0)})\ot m_{(0)}\\
&\overset{(\ref{hombia3})}{=}&\alpha _H^{-3}((\alpha _H^{-2}(m_{(-1)})_1\cdot n)_{(-1)}
\alpha _H^2(\alpha _H^{-2}(m_{(-1)})_2))
\ot \alpha _N^{-1}((\alpha _H^{-2}(m_{(-1)})_1\cdot n)_{(0)})\ot m_{(0)}\\
&\overset{(\ref{homYD})}{=}&\alpha _H^{-3}(\alpha _H^2(\alpha _H^{-2}(m_{(-1)})_1)
\alpha _H(n_{(-1)}))
\ot \alpha _N^{-1}(\alpha _H(\alpha _H^{-2}(m_{(-1)})_2)\cdot n_{(0)})\ot m_{(0)}\\
&\overset{(\ref{hombia3})}{=}&\alpha _H^{-3}(m_{(-1)_1}\alpha _H(n_{(-1)}))\ot 
\alpha _N^{-1}(\alpha _H^{-1}(m_{(-1)_2})\cdot n_{(0)})\ot m_{(0)}\\
&\overset{(\ref{leftcom2})}{=}&\alpha _H^{-2}(m_{(-1)}n_{(-1)})\ot 
\alpha _N^{-1}(\alpha _H^{-1}(m_{(0)_{(-1)}})\cdot n_{(0)})\ot \alpha _M^{-1}(m_{(0)_{(0)}})\\
&=&(id_H\ot c_{M, N})(\alpha _H^{-2}(m_{(-1)}n_{(-1)})\ot 
(m_{(0)}\ot n_{(0)}))\\
&=&(id_H\ot c_{M, N})((m\ot n)_{(-1)}\ot (m\ot n)_{(0)})\\
&=&((id_H\ot c_{M, N})\circ \lambda _{M\hot N})(m\ot n), 
\end{eqnarray*}
finishing the proof.
\end{proof}
\begin{theorem}
Let $(H, \mu _H, \Delta _H, \alpha _H)$ be a Hom-bialgebra such that $\alpha _H$ is bijective. 
Then $_H^H{\mathcal YD}$ is a quasi-braided pre-tensor category, with tensor product 
$\hot $, associativity constraints $a_{M, N, P}$ and quasi-braiding $c_{M, N}$ defined in 
Propositions \ref{deftensprod}, \ref{assoccon} and \ref{defbraiding}, respectively. 
\end{theorem}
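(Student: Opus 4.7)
The plan is to decompose the statement into the standard list of axioms for a quasi-braided pre-tensor category and verify each one, leaning heavily on the three preceding propositions. Propositions~\ref{deftensprod}, \ref{assoccon} and \ref{defbraiding} already show that $M\hot N$ is an object of $_H^H{\mathcal YD}$ (its structure maps are bijective because $\alpha_M$ and $\alpha_N$ are), and that the components $a_{M,N,P}$, $c_{M,N}$ are morphisms in the category. What remains is: (a) bifunctoriality of $\hot$; (b) naturality of $a$ and $c$; (c) the pentagon axiom for $a$; and (d) the two hexagon axioms linking $c$ with $a$.

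For (a) and (b), if $f:M\to M'$ and $g:N\to N'$ are morphisms in $_H^H{\mathcal YD}$, then $f\ot g:M\hot N\to M'\hot N'$ is $H$-linear and $H$-colinear by a direct application of the formulas for $\hot$, since the action on $M\hot N$ is $\Delta$-componentwise and the coaction is the product of the coactions of $M$ and $N$ twisted by $\alpha_H^{-2}$. The fact that $f,g$ commute with the structure maps implies the same for $\alpha_M^{\pm 1}$ and $\alpha_N^{\pm 1}$; hence naturality of $a_{M,N,P}$ and $c_{M,N}$ reduces to moving $f,g,h$ past $\alpha_?^{\pm 1}$ on each tensor factor, which is immediate.

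For (c), the pentagon axiom, I would apply both sides to a pure tensor $((m\ot n)\ot p)\ot q$. On the one hand,
\begin{eqnarray*}
((id_M\hot a_{N,P,Q})\circ a_{M,N\hot P,Q}\circ (a_{M,N,P}\hot id_Q))(((m\ot n)\ot p)\ot q)
=\alpha_M^{-2}(m)\ot(\alpha_N^{-1}(n)\ot(\alpha_P(p)\ot \alpha_Q^2(q))).
\end{eqnarray*}
On the other hand, $(a_{M,N,P\hot Q}\circ a_{M\hot N,P,Q})(((m\ot n)\ot p)\ot q)$ produces the same expression. The verification is purely a bookkeeping exercise in tracking how many copies of $\alpha_?$ or $\alpha_?^{-1}$ accumulate on each tensor factor; no use of the Hom-bialgebra axioms beyond multiplicativity is required.

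The main obstacle, as expected, will be (d): the two hexagon axioms relating $c$ and $a$, since both constructions carry nontrivial $\alpha_H^{\pm 1}$ twists and the coaction on $M\hot N$ involves $\alpha_H^{-2}(m_{(-1)}n_{(-1)})$. The strategy for, say, the first hexagon $a_{N,P,M}\circ c_{M, N\hot P}\circ a_{M,N,P}=(id_N\hot c_{M,P})\circ a_{N,M,P}\circ (c_{M,N}\hot id_P)$ is to compute both sides on $(m\ot n)\ot p$, then use Hom-coassociativity (\ref{leftcom2}) to split the iterated coaction of $m$, the Hom-associativity/module axioms (\ref{hommod1})--(\ref{hommod2}) to rearrange the resulting iterated actions, and multiplicativity (\ref{hombia3}) of $\alpha_H$ on $\Delta_H$ to collect $\alpha_H^{\pm k}$ factors; crucially the Yetter--Drinfeld compatibility (\ref{homYD}) itself is not needed for the hexagons, only the module/comodule and Hom-bialgebra axioms, because the same input $m_{(-1)}$ controls both sides. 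The second hexagon is proved by the mirror computation. As an independent check, the braid relation $(c_{N,P}\hot \alpha_M)\circ (\alpha_N\hot c_{M,P})\circ (c_{M,N}\hot \alpha_P)=(\alpha_P\hot c_{M,N})\circ (c_{M,P}\hot \alpha_N)\circ (\alpha_M\hot c_{N,P})$ follows either from the general dodecagonal consequence of the hexagons or directly from Proposition~\ref{BYD} via the identity $c_{M,N}=(\alpha_N^{-1}\ot \alpha_M^{-1})\circ B_{M,N}$, which recovers the HYBE as a special case.
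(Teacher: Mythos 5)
Your proposal is correct and follows essentially the same route as the paper: the paper likewise dismisses bifunctoriality and naturality as routine and reduces the theorem to the pentagon axiom (verified exactly by your bookkeeping computation, both sides giving $\alpha _M^{-2}(m)\ot \alpha _N^{-1}(n)\ot \alpha _P(p)\ot \alpha _Q^{2}(q)$) and the two hexagons, which it proves by precisely the element-wise computations you outline, using only (\ref{leftcom1}), (\ref{leftcom2}), (\ref{hommod1}), (\ref{hommod2}) and (\ref{hombia3}) --- confirming your observation that the Yetter--Drinfeld compatibility (\ref{homYD}) is not needed for the hexagons. Your closing remark on the braid relation corresponds to the paper's separate subsequent theorem (derived there from the dodecagonal relation) and is not required for the present statement.
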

\begin{proof}
The only nontrivial things left to prove are the pentagon axiom for $a_{M, N, P}$ and the 
two hexagonal relations for $c_{M, N}$. The pentagon axiom for $a_{M, N, P}$ follows by 
a straightforward computation that shows the equality \\[2mm]
${\;\;\;\;}$$((id_M\ot a_{N, P, Q})\circ a_{M, N\hot P, Q}\circ (a_{M, N, P}\ot id_Q))
(((m\ot n)\ot p)\ot q)$
\begin{eqnarray*}
&=&(a_{M, N, P\hot Q}\circ a_{M\hot N, P, Q})(((m\ot n)\ot p)\ot q)\\
&=&\alpha _M^{-2}(m)\ot \alpha _N^{-1}(n)\ot \alpha _P(p)\ot \alpha _Q^2(q),
\end{eqnarray*}
for any objects $(M, \alpha_M), (N, \alpha _N), (P, \alpha _P), (Q, \alpha _Q)\in \;_H^H{\mathcal YD}$.

We prove the first hexagonal relation for $c_{M, N}$. Let 
$(M, \alpha_M), (N, \alpha _N), (P, \alpha _P)\in \;_H^H{\mathcal YD}$; 
we compute:\\[2mm]
${\;\;\;\;}$$(a_{N, P, M}\circ c_{M, N\hot P}\circ a_{M, N, P})((m\ot n)\ot p)$
\begin{eqnarray*}
&=&(a_{N, P, M}\circ c_{M, N\hot P})(\alpha _M^{-1}(m)\ot (n\ot \alpha _P(p)))\\
&=&a_{N, P, M}((\alpha _N^{-1}\ot \alpha _P^{-1})(\alpha _H^{-1}(\alpha _M^{-1}(m)_{(-1)})
\cdot (n\ot \alpha _P(p)))\ot \alpha _M^{-1}(\alpha _M^{-1}(m)_{(0)}))\\
&\overset{(\ref{leftcom1})}{=}&a_{N, P, M}([\alpha _N^{-1}(\alpha _H^{-2}(m_{(-1)})_1\cdot n)
\ot \alpha _P^{-1}(\alpha _H^{-2}(m_{(-1)})_2\cdot \alpha _P(p))]\ot \alpha _M^{-2}(m_{(0)}))\\
&=&\alpha _N^{-2}(\alpha _H^{-2}(m_{(-1)})_1\cdot n)
\ot \alpha _P^{-1}(\alpha _H^{-2}(m_{(-1)})_2\cdot \alpha _P(p))\ot \alpha _M^{-1}(m_{(0)}), 
\end{eqnarray*}
${\;\;\;\;}$$((id_N\ot c_{M, P})\circ a_{N, M, P}\circ (c_{M, N}\ot id_P))((m\ot n)\ot p)$
\begin{eqnarray*}
&=&((id_N\ot c_{M, P})\circ a_{N, M, P})((\alpha _N^{-1}(\alpha _H^{-1}(m_{(-1)})\cdot n)\ot 
\alpha _M^{-1}(m_{(0)}))\ot p)\\
&=&(id_N\ot c_{M, P})(\alpha _N^{-2}(\alpha _H^{-1}(m_{(-1)})\cdot n)\ot 
(\alpha _M^{-1}(m_{(0)})\ot \alpha _P(p)))\\
&=&\alpha _N^{-2}(\alpha _H^{-1}(m_{(-1)})\cdot n)\ot 
\alpha _P^{-1}(\alpha _H^{-1}(\alpha _M^{-1}(m_{(0)})_{(-1)})\cdot \alpha _P(p))\ot 
\alpha _M^{-1}(\alpha _M^{-1}(m_{(0)})_{(0)})\\
&\overset{(\ref{leftcom1})}{=}&\alpha _N^{-2}(\alpha _H^{-1}(m_{(-1)})\cdot n)\ot 
\alpha _P^{-1}(\alpha _H^{-2}(m_{(0)_{(-1)}})\cdot \alpha _P(p))\ot 
\alpha _M^{-2}(m_{(0)_{(0)}})\\
&\overset{(\ref{leftcom2})}{=}&\alpha _N^{-2}(\alpha _H^{-2}(m_{(-1)_1})\cdot n)\ot 
\alpha _P^{-1}(\alpha _H^{-2}(m_{(-1)_2})\cdot \alpha _P(p))\ot 
\alpha _M^{-1}(m_{(0)})\\
&\overset{(\ref{hombia3})}{=}&\alpha _N^{-2}(\alpha _H^{-2}(m_{(-1)})_1\cdot n)
\ot \alpha _P^{-1}(\alpha _H^{-2}(m_{(-1)})_2\cdot \alpha _P(p))\ot \alpha _M^{-1}(m_{(0)}),
\end{eqnarray*}
and the two terms are obviously equal.

Now we prove the second hexagonal relation for $c_{M, N}$:\\[2mm]
${\;\;\;\;}$$(a_{P, M, N}^{-1}\circ c_{M\hot N, P}\circ a_{M, N, P}^{-1})(m\ot (n\ot p))$
\begin{eqnarray*}
&=&(a_{P, M, N}^{-1}\circ c_{M\hot N, P})((\alpha _M(m)\ot n)\ot \alpha _P^{-1}(p))\\
&=&a_{P, M, N}^{-1}(\alpha _P^{-1}(\alpha _H^{-1}((\alpha _M(m)\ot n)_{(-1)})
\cdot \alpha _P^{-1}(p))\ot (\alpha _M^{-1}\ot \alpha _N^{-1})((\alpha _M(m)\ot n)_{(0)}))\\ 
&=&a_{P, M, N}^{-1}(\alpha _P^{-1}(\alpha _H^{-3}(\alpha _M(m)_{(-1)}n_{(-1)})
\cdot \alpha _P^{-1}(p))\ot (\alpha _M^{-1}(\alpha _M(m)_{(0)})\ot \alpha _N^{-1}(n_{(0)})))\\ 
&\overset{(\ref{leftcom1})}{=}&\alpha _H^{-3}(\alpha _H(m_{(-1)})n_{(-1)})
\cdot \alpha _P^{-1}(p)\ot m_{(0)}\ot \alpha _N^{-2}(n_{(0)}), 
\end{eqnarray*}
${\;\;\;\;}$$((c_{M, P}\ot id_N)\circ a_{M, P, N}^{-1}\circ (id_M\ot c_{N, P}))(m\ot (n\ot p))$
\begin{eqnarray*}
&=&((c_{M, P}\ot id_N)\circ a_{M, P, N}^{-1})(m\ot [\alpha _P^{-1}(\alpha _H^{-1}(n_{(-1)})\cdot p)
\ot \alpha _N^{-1}(n_{(0)})])\\
&=&(c_{M, P}\ot id_N)([\alpha _M(m)\ot \alpha _P^{-1}(\alpha _H^{-1}(n_{(-1)})\cdot p)]
\ot \alpha _N^{-2}(n_{(0)}))\\
&=&\alpha _P^{-1}(\alpha _H^{-1}(\alpha _M(m)_{(-1)})\cdot 
\alpha _P^{-1}(\alpha _H^{-1}(n_{(-1)})\cdot p))\ot \alpha _M^{-1}(\alpha _M(m)_{(0)})
\ot \alpha _N^{-2}(n_{(0)})\\
&\overset{(\ref{leftcom1})}{=}&\alpha _P^{-1}(m_{(-1)}\cdot 
\alpha _P^{-1}(\alpha _H^{-1}(n_{(-1)})\cdot p))\ot m_{(0)}
\ot \alpha _N^{-2}(n_{(0)})\\
&\overset{(\ref{hommod1})}{=}&\alpha _P^{-1}(m_{(-1)}\cdot 
[\alpha _H^{-2}(n_{(-1)})\cdot \alpha _P^{-1}(p)])\ot m_{(0)}
\ot \alpha _N^{-2}(n_{(0)})\\
&\overset{(\ref{hommod2})}{=}&\alpha _P^{-1}([\alpha _H^{-1}(m_{(-1)}) 
\alpha _H^{-2}(n_{(-1)})]\cdot p)\ot m_{(0)}
\ot \alpha _N^{-2}(n_{(0)})\\
&\overset{(\ref{hommod1})}{=}&\alpha _H^{-1}(\alpha _H^{-1}(m_{(-1)}) 
\alpha _H^{-2}(n_{(-1)}))\cdot \alpha _P^{-1}(p)\ot m_{(0)}
\ot \alpha _N^{-2}(n_{(0)})\\
&=&\alpha _H^{-3}(\alpha _H(m_{(-1)})n_{(-1)})
\cdot \alpha _P^{-1}(p)\ot m_{(0)}\ot \alpha _N^{-2}(n_{(0)}),
\end{eqnarray*}
finishing the proof. 
\end{proof}
\begin{theorem}
Let $(H, \mu _H, \Delta _H, \alpha _H)$ be a Hom-bialgebra such that $\alpha _H$ is bijective and  
$(M, \alpha _M)$, $(N, \alpha _N)$, $(P, \alpha _P)$ three objects in $_H^H{\mathcal YD}$. Then the 
quasi-braiding $c$ satisfies the braid relation 
\begin{eqnarray}
&&(id_P\ot c_{M, N})\circ (c_{M, P}\ot id_N)\circ (id_M\ot c_{N, P})\nonumber \\
&&\;\;\;\;\;\;\;\;\;\;\;\;\;=(c_{N, P}\ot id_M)\circ (id_N\ot c_{M, P})
\circ (c_{M, N}\ot id_P). \label{braidingbraid}
\end{eqnarray}
\end{theorem}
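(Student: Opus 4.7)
The plan is to derive the braid relation (\ref{braidingbraid}) directly from the Hom-Yang-Baxter equation (\ref{YDhombraid}) already established for the maps $B_{M,N}$ in Proposition \ref{BYD}. The key observation is that $c_{M,N}$ and $B_{M,N}$ differ only by $\alpha$-normalization: one has $c_{M,N} = (\alpha_N^{-1}\ot\alpha_M^{-1})\circ B_{M,N}$, and, using the $\alpha$-compatibility of $B_{M,N}$ from Proposition \ref{BYD}, this can be rewritten as
\[
B_{M,N}=c_{M,N}\circ(\alpha_M\ot\alpha_N).
\]

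First, I will record a sliding lemma: each of the six basic maps appearing in (\ref{braidingbraid}) intertwines the appropriate triple tensor product of structure maps, for instance
\[
(\alpha_P\ot\alpha_M\ot\alpha_N)\circ(c_{M,P}\ot id_N)=(c_{M,P}\ot id_N)\circ(\alpha_M\ot\alpha_P\ot\alpha_N),
\]
and analogously for $id_P\ot c_{M,N}$, $id_M\ot c_{N,P}$, $c_{N,P}\ot id_M$, $id_N\ot c_{M,P}$ and $c_{M,N}\ot id_P$. All six identities are immediate consequences of the $\alpha$-compatibility $(\alpha_Y\ot\alpha_X)\circ c_{X,Y} = c_{X,Y}\circ(\alpha_X\ot\alpha_Y)$ from Proposition \ref{defbraiding}.

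Next, I will substitute $B_{X,Y}=c_{X,Y}\circ(\alpha_X\ot\alpha_Y)$ into both sides of (\ref{YDhombraid}) and use the sliding lemma repeatedly to push all structure maps to the right. The left-hand side of (\ref{YDhombraid}) rewrites as
\[
(id_P\ot c_{M,N})\circ(c_{M,P}\ot id_N)\circ(id_M\ot c_{N,P})\circ(\alpha_M^3\ot\alpha_N^3\ot\alpha_P^3),
\]
and the right-hand side rewrites as
\[
(c_{N,P}\ot id_M)\circ(id_N\ot c_{M,P})\circ(c_{M,N}\ot id_P)\circ(\alpha_M^3\ot\alpha_N^3\ot\alpha_P^3).
\]
Since $\alpha_M, \alpha_N, \alpha_P$ are bijective, the common trailing factor can be cancelled, yielding exactly (\ref{braidingbraid}).

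The argument is entirely formal, so the main obstacle is bookkeeping: one has to keep careful track of the ordering of factors in the intermediate tensor products (which cycle through $M\ot N\ot P$, $M\ot P\ot N$, $P\ot M\ot N$, $P\ot N\ot M$ on the left, and $M\ot N\ot P$, $N\ot M\ot P$, $N\ot P\ot M$, $P\ot N\ot M$ on the right), so as to apply the correct variant of the sliding lemma at each step. Once this is done consistently, no further coaction-and-action manipulation is required beyond what was already carried out in the proof of Proposition \ref{BYD}, so this is a substantially shorter argument than a direct verification in the style of Proposition \ref{BYD} would be.
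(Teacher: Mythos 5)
Your proof is correct, and it takes a genuinely different route from the paper's. The paper deduces (\ref{braidingbraid}) categorically: since the preceding theorem establishes that $c$ is a quasi-braiding on the pre-tensor category $_H^H{\mathcal YD}$ (via the two hexagon axioms), $c$ automatically satisfies the dodecagonal braid relation of \cite{kassel}, p.~317; the paper then substitutes the explicit associators, which act as $\alpha _M^{-1}\ot id_N\ot \alpha _P$, into the dodecagon and cancels the structure maps pairwise to arrive at (\ref{braidingbraid}). You instead run the paper's Proposition \ref{relbraidhombraid} in reverse: starting from the HYBE (\ref{YDhombraid}) of Proposition \ref{BYD}, you substitute $B_{X,Y}=c_{X,Y}\circ (\alpha _X\ot \alpha _Y)$ (which follows from the relation $B_{X,Y}=(\alpha _Y\ot \alpha _X)\circ c_{X,Y}$, recorded in the paper right after this theorem, combined with the $\alpha$-compatibility of $c$ from Proposition \ref{defbraiding}), slide all structure maps to the right, and cancel. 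I checked the bookkeeping: each of the three layers on either side of (\ref{YDhombraid}) contributes exactly one application of the structure map to each tensor factor, so both sides collapse to the corresponding side of (\ref{braidingbraid}) composed with $\alpha _M^3\ot \alpha _N^3\ot \alpha _P^3$, and this trailing factor is bijective precisely because the objects lie in $_H^H{\mathcal YD}$; your six sliding identities are indeed immediate from Proposition \ref{defbraiding}. There is no circularity, since Proposition \ref{BYD} is proved by a direct action/coaction computation independent of this theorem and of the hexagon axioms. As for what each approach buys: the paper's argument exhibits the braid relation as a purely formal consequence of the quasi-braided pre-tensor structure (but leans on the hexagons and the dodecagon axiom), whereas yours bypasses that machinery entirely, reuses the computation already done in Proposition \ref{BYD}, and makes explicit a fact the paper states only in one direction (as Proposition \ref{relbraidhombraid}): when the structure maps are bijective, the braid relation for $c$ and the HYBE for $B$ are equivalent, the forward implication holding even without bijectivity.
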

\begin{proof}
Since $c$ is the quasi-braiding of the pre-tensor category $_H^H{\mathcal YD}$, 
whose associators are nontrivial, it follows that $c$ satisfies the dodecagonal braid relation 
(see \cite{kassel}, p. 317)
\begin{eqnarray*}
&&(id_P\ot c_{M, N})\circ a_{P, M, N}\circ (c_{M, P}\ot id_N)\circ 
a_{M, P, N}^{-1}\circ (id_M\ot c_{N, P})\circ a_{M, N, P}\\
&&\;\;\;\;\;\;\;\;=a_{P, N, M}\circ (c_{N, P}\ot id_M)\circ a_{N, P, M}^{-1}\circ (id_N\ot c_{M, P})
\circ a_{N, M, P}\circ (c_{M, N}\ot id_P), 
\end{eqnarray*}
which, by using the formulae of the associators, becomes 
\begin{eqnarray*}
&&(id_P\ot c_{M, N})\circ (\alpha _P^{-1}\ot id_M\ot \alpha _N)\circ (c_{M, P}\ot id_N)\circ 
(\alpha _M\ot id_P\ot \alpha _N^{-1})\\
&&\;\;\;\;\;\;\;\;\;\;\;\;\;\;\;\;\;\;\;\circ (id_M\ot c_{N, P})\circ (\alpha _M^{-1}\ot id_N\ot \alpha _P)\\
&&\;\;\;\;\;\;\;\;=(\alpha _P^{-1}\ot id_N\ot \alpha _M)\circ (c_{N, P}\ot id_M)\circ 
(\alpha _N\ot id_P\ot \alpha _M^{-1})\circ (id_N\ot c_{M, P})\\
&&\;\;\;\;\;\;\;\;\;\;\;\;\;\;\;\;\;\;\;\;\;\;\;\;\;\;\;\circ (\alpha _N^{-1}\ot id_M\ot \alpha _P)
\circ (c_{M, N}\ot id_P).
\end{eqnarray*}
In the left hand side of this relation, $\alpha _N$ and $\alpha _N^{-1}$ cancel each other, 
as well as $\alpha _M$ and $\alpha _M^{-1}$. Similarly, in the right hand side, 
$\alpha _M$ and $\alpha _M^{-1}$ and also $\alpha _N$ and $\alpha _N^{-1}$ cancel each other. 
So, this relation becomes:
\begin{eqnarray*}
&&(id_P\ot c_{M, N})\circ (\alpha _P^{-1}\ot id_M\ot id_N)\circ (c_{M, P}\ot id_N)\circ 
(id_M\ot c_{N, P})\circ (id_M\ot id_N\ot \alpha _P)\\
&&\;\;=(\alpha _P^{-1}\ot id_N\ot id_M)\circ (c_{N, P}\ot id_M)\circ 
(id_N\ot c_{M, P})\circ (id_N\ot id_M\ot \alpha _P)
\circ (c_{M, N}\ot id_P), 
\end{eqnarray*}
which may be written as 
\begin{eqnarray*}
&&(\alpha _P^{-1}\ot id_N\ot id_M)\circ 
(id_P\ot c_{M, N})\circ (c_{M, P}\ot id_N)\circ 
(id_M\ot c_{N, P})\circ (id_M\ot id_N\ot \alpha _P)\\
&&\;\;=(\alpha _P^{-1}\ot id_N\ot id_M)\circ (c_{N, P}\ot id_M)\circ 
(id_N\ot c_{M, P})
\circ (c_{M, N}\ot id_P)\circ (id_M\ot id_N\ot \alpha _P), 
\end{eqnarray*}
which is obviously equivalent to (\ref{braidingbraid}). 
\end{proof}

Note that, for objects $(M, \alpha_M), (N, \alpha _N)\in \;_H^H{\mathcal YD}$, 
i.e. Yetter-Drinfeld modules with bijective $\alpha _M$ and $\alpha _N$, 
the maps $B_{M, N}$ defined in (\ref{defB}) and the maps $c_{M, N}$ 
defined in (\ref{defc}) are related by the formula $B_{M, N}=(\alpha _N\ot \alpha _M)\circ c_{M, N}$. 
Our next result shows that, in this case, the fact that the maps $B_{M, N}$ satisfy the HYBE is a 
consequence of the fact that the maps $c_{M, N}$ satisfy the braid relation. 
One may call the HYBE equation ''Hom-braid relation'', since it is a twisting of the braid relation.
\begin{proposition} \label{relbraidhombraid}
Let $M, N, P$ be linear spaces and $\alpha _M:M\rightarrow M$, $\alpha _N:N\rightarrow N$, 
$\alpha _P:P\rightarrow P$ linear maps satisfying the following conditions:
\begin{eqnarray}
&&(\alpha _N\ot \alpha _M)\circ c_{M, N}=c_{M, N}\circ (\alpha _M\ot \alpha _N), \label{com1} \\
&&(\alpha _P\ot \alpha _M)\circ c_{M, P}=c_{M, P}\circ (\alpha _M\ot \alpha _P), \label{com2} \\
&&(\alpha _P\ot \alpha _N)\circ c_{N, P}=c_{N, P}\circ (\alpha _N\ot \alpha _P), \label{com3} \\
&&(id_P\ot c_{M, N})\circ (c_{M, P}\ot id_N)\circ (id_M\ot c_{N, P})\nonumber \\
&&\;\;\;\;\;\;\;\;\;\;\;\;\;=(c_{N, P}\ot id_M)\circ (id_N\ot c_{M, P})
\circ (c_{M, N}\ot id_P). \label{braidc}
\end{eqnarray}
Define the maps $B_{M, N}:=(\alpha _N\ot \alpha _M)\circ c_{M, N}$, 
$B_{M, P}:=(\alpha _P\ot \alpha _M)\circ c_{M, P}$, $B_{N, P}:=(\alpha _P\ot \alpha _N)\circ c_{N, P}$. 
Then the following relations hold:
\begin{eqnarray}
&&(\alpha _N\ot \alpha _M)\circ B_{M, N}=B_{M, N}\circ (\alpha _M\ot \alpha _N),  \\
&&(\alpha _P\ot \alpha _M)\circ B_{M, P}=B_{M, P}\circ (\alpha _M\ot \alpha _P),  \\
&&(\alpha _P\ot \alpha _N)\circ B_{N, P}=B_{N, P}\circ (\alpha _N\ot \alpha _P),  \\
&&(\alpha _P\ot B_{M, N})\circ (B_{M, P}\ot \alpha _N)\circ (\alpha _M\ot B_{N, P})\nonumber \\
&&\;\;\;\;\;\;\;\;\;\;\;\;\;=(B_{N, P}\ot \alpha _M)\circ (\alpha _N\ot B_{M, P})
\circ (B_{M, N}\ot \alpha _P). \label{hombraidB}
\end{eqnarray}
\end{proposition}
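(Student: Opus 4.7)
The plan is to reduce the whole proposition to formal manipulations: the three commutation-type identities for $B_{-,-}$ follow at once from the definitions plus \ref{com1}--\ref{com3}, and the Hom-Yang-Baxter equation \ref{hombraidB} will be obtained from the braid relation \ref{braidc} by rewriting each factor of type $B$ as $c$ composed with a suitable $\alpha$-tensor, then sliding all the $\alpha$'s to the outside.

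First I would dispatch the three easy identities. For instance,
\begin{eqnarray*}
(\alpha_N\ot\alpha_M)\circ B_{M,N}
&=&(\alpha_N\ot\alpha_M)\circ(\alpha_N\ot\alpha_M)\circ c_{M,N}\\
&\overset{(\ref{com1})}{=}&(\alpha_N\ot\alpha_M)\circ c_{M,N}\circ(\alpha_M\ot\alpha_N)
= B_{M,N}\circ(\alpha_M\ot\alpha_N),
\end{eqnarray*}
and the analogous statements for $B_{M,P}$ and $B_{N,P}$ follow from \ref{com2} and \ref{com3} in exactly the same way.

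For the HYBE, the key observation is the factorizations
\begin{eqnarray*}
&&\alpha_P\ot B_{M,N}=(\alpha_P\ot\alpha_N\ot\alpha_M)\circ(id_P\ot c_{M,N}),\\
&&B_{M,P}\ot\alpha_N=(\alpha_P\ot\alpha_M\ot\alpha_N)\circ(c_{M,P}\ot id_N),\\
&&\alpha_M\ot B_{N,P}=(\alpha_M\ot\alpha_P\ot\alpha_N)\circ(id_M\ot c_{N,P}),
\end{eqnarray*}
and the symmetric ones on the other side. Substituting these into the left-hand side of \ref{hombraidB} produces an alternating composition of $\alpha$-tensors and of the maps $id_P\ot c_{M,N}$, $c_{M,P}\ot id_N$, $id_M\ot c_{N,P}$. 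Using $(id_P\ot c_{M,N})\circ(\alpha_P\ot\alpha_M\ot\alpha_N)=(\alpha_P\ot\alpha_N\ot\alpha_M)\circ(id_P\ot c_{M,N})$ (a direct consequence of \ref{com1}) and, analogously, $(c_{M,P}\ot id_N)\circ(\alpha_M\ot\alpha_P\ot\alpha_N)=(\alpha_P\ot\alpha_M\ot\alpha_N)\circ(c_{M,P}\ot id_N)$ (from \ref{com2}), I would push the two interior $\alpha$-tensors leftwards through the $c$'s in three successive steps. Each step just renames the factors and all three leave the same leftmost tensor $\alpha_P\ot\alpha_N\ot\alpha_M$, so the whole left-hand side becomes
\begin{eqnarray*}
(\alpha_P^3\ot\alpha_N^3\ot\alpha_M^3)\circ(id_P\ot c_{M,N})\circ(c_{M,P}\ot id_N)\circ(id_M\ot c_{N,P}).
\end{eqnarray*}
The same procedure applied to the right-hand side of \ref{hombraidB} (now using \ref{com3} and \ref{com2} to slide $\alpha_N\ot\alpha_P\ot\alpha_M$ and $\alpha_N\ot\alpha_M\ot\alpha_P$ past $c_{N,P}\ot id_M$ and $id_N\ot c_{M,P}$ respectively) yields
\begin{eqnarray*}
(\alpha_P^3\ot\alpha_N^3\ot\alpha_M^3)\circ(c_{N,P}\ot id_M)\circ(id_N\ot c_{M,P})\circ(c_{M,N}\ot id_P).
\end{eqnarray*}
Hence \ref{hombraidB} is reduced to precisely \ref{braidc}, which is the hypothesis.

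There is no serious obstacle here: the argument is formal and the only real care needed is tensor-slot bookkeeping, since the three $\alpha$-tensors that sit between the $c$'s are in three different orderings of $\alpha_M,\alpha_N,\alpha_P$ and one has to verify that each slide past a neighbouring $c$ lands exactly in the next ordering one wants, so that all the $\alpha$-tensors can be amalgamated into a single $\alpha_P^3\ot\alpha_N^3\ot\alpha_M^3$ on the outside.
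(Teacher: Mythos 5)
Your proof is correct and takes essentially the same approach as the paper's: substitute the factorizations $B_{-,-}=(\alpha \ot \alpha )\circ c_{-,-}$, slide the interior $\alpha $-tensors past the $c$'s using (\ref{com1})--(\ref{com3}), and invoke (\ref{braidc}). The only cosmetic difference is that you normalize both sides to $(\alpha _P^3\ot \alpha _N^3\ot \alpha _M^3)\circ (\mbox{braid composite})$ by pushing every $\alpha $ to the outside (which also uses (\ref{com2})), whereas the paper slides one $\alpha $-tensor leftward and one rightward on each side, applying (\ref{braidc}) in the middle of the chain and using only (\ref{com1}) and (\ref{com3}).
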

\begin{proof}
The first three relations are obvious, because of (\ref{com1})-(\ref{com3}). 
We prove (\ref{hombraidB}): \\[2mm]
${\;\;\;\;}$$(\alpha _P\ot B_{M, N})\circ (B_{M, P}\ot \alpha _N)\circ (\alpha _M\ot B_{N, P})$
\begin{eqnarray*}
&=&(\alpha _P\ot \alpha _N\ot \alpha _M)\circ (id_P\ot c_{M, N})
\circ (\alpha _P\ot \alpha _M\ot \alpha _N)\circ (c_{M, P}\ot id_N)\\
&&\circ (\alpha _M\ot \alpha _P\ot \alpha _N)
\circ (id_M\ot c_{N, P})\\
&\overset{(\ref{com1}),\; (\ref{com3})}{=}&(\alpha _P\ot \alpha _N\ot \alpha _M)
\circ (\alpha _P\ot \alpha _N\ot \alpha _M)\circ (id_P\ot c_{M, N})\circ (c_{M, P}\ot id_N)\\
&&\circ (id_M\ot c_{N, P})\circ (\alpha _M\ot \alpha _N\ot \alpha _P)\\
&\overset{(\ref{braidc})}{=}&(\alpha _P\ot \alpha _N\ot \alpha _M)
\circ (\alpha _P\ot \alpha _N\ot \alpha _M)\circ (c_{N, P}\ot id_M)\circ (id_N\ot c_{M, P})\\
&&\circ (c_{M, N}\ot id_P)\circ (\alpha _M\ot \alpha _N\ot \alpha _P)\\
&\overset{(\ref{com1}),\; (\ref{com3})}{=}&(\alpha _P\ot \alpha _N\ot \alpha _M)
\circ (c_{N, P}\ot id_M)
\circ (\alpha _N\ot \alpha _P\ot \alpha _M)\circ (id_N\ot c_{M, P})\\
&&\circ (\alpha _N\ot \alpha _M\ot \alpha _P)\circ (c_{M, N}\ot id_P)\\
&=&(B_{N, P}\ot \alpha _M)\circ (\alpha _N\ot B_{M, P})
\circ (B_{M, N}\ot \alpha _P), 
\end{eqnarray*}
finishing the proof.
\end{proof}

A particular case of Proposition \ref{relbraidhombraid} is the following result given  in \cite{yauhomyb2}:
\begin{corollary}
Let $V$ be a linear space, $c:V\ot V\rightarrow V\ot V$ a linear map satisfying the braid relation 
$(id_V\ot c)\circ (c\ot id_V)\circ (id_V\ot c)=(c\ot id_V)\circ (id_V\ot c)\circ (c\ot id_V)$ and 
$\alpha :V\rightarrow V$ a linear map such that $(\alpha \ot \alpha )\circ c=c\circ (\alpha \ot \alpha )$. 
Then the linear map $B:=(\alpha \ot \alpha )\circ c:V\ot V\rightarrow V\ot V$ satisfies the relations 
$(\alpha \ot \alpha )\circ B=B\circ (\alpha \ot \alpha )$ and 
$(\alpha \ot B)\circ (B\ot \alpha )\circ (\alpha \ot B)=(B\ot \alpha )\circ (\alpha \ot B)\circ (B\ot \alpha )$.
\end{corollary}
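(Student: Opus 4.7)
The plan is to derive this corollary directly from \prref{relbraidhombraid} by specialization. First I would set $M=N=P=V$, take all three structure maps to equal $\alpha$, and take all three $c$-maps ($c_{M,N}$, $c_{M,P}$, $c_{N,P}$) to equal the given $c$. With this choice, conditions (\ref{com1}), (\ref{com2}), (\ref{com3}) all collapse to the single hypothesis $(\alpha\ot\alpha)\circ c=c\circ(\alpha\ot\alpha)$, and condition (\ref{braidc}) collapses to the assumed braid relation for $c$ on $V\ot V\ot V$.

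Next I would observe that the maps $B_{M,N}$, $B_{M,P}$, $B_{N,P}$ defined in \prref{relbraidhombraid} all coincide with $B=(\alpha\ot\alpha)\circ c$ under this specialization. Applying \prref{relbraidhombraid} then yields the compatibility $(\alpha\ot\alpha)\circ B=B\circ(\alpha\ot\alpha)$ (the three relations there all collapse to this one) as well as the Hom-Yang-Baxter equation
\[
(\alpha\ot B)\circ(B\ot\alpha)\circ(\alpha\ot B)=(B\ot\alpha)\circ(\alpha\ot B)\circ(B\ot\alpha),
\]
which is precisely the second assertion. There is essentially no obstacle here: the content of the corollary is already contained in the proposition, and the work reduces to noting that the hypotheses of the proposition are trivially satisfied in this degenerate situation. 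The only thing to be slightly careful about is that the formulas $B_{M,N}=(\alpha_N\ot\alpha_M)\circ c_{M,N}$ etc.\ indeed specialize correctly, which is immediate.
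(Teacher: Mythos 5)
Your proof is correct and matches the paper's own treatment: the paper introduces this corollary explicitly as ``a particular case of Proposition \ref{pr:relbraidhombraid}'', i.e.\ exactly the specialization $M=N=P=V$, $\alpha_M=\alpha_N=\alpha_P=\alpha$, $c_{M,N}=c_{M,P}=c_{N,P}=c$ that you carry out. Nothing further is needed.
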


We can make now the connection between Yetter-Drinfeld modules and modules over quasitriangular 
Hom-bialgebras. 
\begin{proposition} \label{quasitriang}
Let $(H, \mu _H, \Delta _H, \alpha _H, R)$ be a quasitriangular Hom-bialgebra such that
\begin{eqnarray}
&&(\alpha _H\ot \alpha _H)(R)=R.  \label{invar}
\end{eqnarray}
(i) Let $(M, \alpha _M)$ be a left $H$-module with action $H\ot M\rightarrow M$, 
$h\ot m\mapsto h\cdot m$. Define the linear map $\lambda _M:M\rightarrow H\ot M$, 
$\lambda _M(m)=m_{(-1)}\ot m_{(0)}:=\alpha _H(R^2)\ot R^1\cdot m$. 
Then $(M, \alpha _M)$ with these structures is a Yetter-Drinfeld module over $H$.  \\
(ii) Assume that $\alpha _H$ is bijective. Let $(N, \alpha _N)$ be another left $H$-module 
with action $H\ot N\rightarrow N$, $h\ot n\mapsto h\cdot n$, regarded as a Yetter-Drinfeld module 
as in (i), via the map $\lambda _N:N\rightarrow H\ot N$, 
$\lambda _N(n)=n_{(-1)}\ot n_{(0)}:=\alpha _H(r^2)\ot r^1\cdot n$. We regard 
$(M\ot N, \alpha _M\ot \alpha _N)$ as a left $H$-module via the standard action 
$h\cdot (m\ot n)=h_1\cdot m\ot h_2\cdot n$ and then we regard 
$(M\ot N, \alpha _M\ot \alpha _N)$ as a Yetter-Drinfeld module as in (i). Then this 
Yetter-Drinfeld module $(M\ot N, \alpha _M\ot \alpha _N)$ coincides with the 
Yetter-Drinfeld module $M\hot N$ defined as in Proposition \ref{deftensprod}.
\end{proposition}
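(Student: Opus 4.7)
The plan is to verify the required axioms directly from the quasitriangularity relations (\ref{homQT1})--(\ref{homQT3}) and the invariance assumption (\ref{invar}), which is the workhorse of the argument because it lets us transfer factors of $\alpha_H$ between the two tensorands of $R$; explicitly, applying $(\alpha_H^{-1}\ot \alpha_H)$ to the identity $(\alpha_H\ot \alpha_H)(R)=R$ gives $R^1\ot \alpha_H^2(R^2)=\alpha_H^{-1}(R^1)\ot \alpha_H(R^2)$, and similar identities hold for every integer shift, by iterating.

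For part (i), the condition (\ref{leftcom1}) is a direct consequence of (\ref{hommod1}) together with (\ref{invar}): computing $(\alpha_H\ot \alpha_M)\lambda_M(m)=\alpha_H^2(R^2)\ot \alpha_H(R^1)\cdot \alpha_M(m)$ and shifting one $\alpha_H$ from the first tensorand to the second via (\ref{invar}) yields $\alpha_H(R^2)\ot R^1\cdot \alpha_M(m)=\lambda_M(\alpha_M(m))$. For coassociativity (\ref{leftcom2}), I would apply comultiplicativity of $\alpha_H$ to (\ref{homQT2}) to obtain $\alpha_H(R^1)\ot \Delta(\alpha_H(R^2))=R^1r^1\ot \alpha_H^2(r^2)\ot \alpha_H^2(R^2)$; combined with the rewriting $r^1\cdot (R^1\cdot m)=(\alpha_H^{-1}(r^1)R^1)\cdot \alpha_M(m)$ from (\ref{hommod2}) and a further $\alpha_H$-balancing via (\ref{invar}), the two sides of (\ref{leftcom2}) coincide.

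The main step is the Yetter-Drinfeld compatibility (\ref{homYD}). Applying (\ref{hommod2}) to both sides rewrites them as, respectively, $\alpha_H(R^2)\alpha_H^2(h_2)\ot (\alpha_H^{-1}(R^1)h_1)\cdot \alpha_M(m)$ and $\alpha_H^2(h_1)\alpha_H^2(R^2)\ot (h_2R^1)\cdot \alpha_M(m)$. Using the shift $\alpha_H^{-1}(R^1)\ot \alpha_H(R^2)=R^1\ot \alpha_H^2(R^2)$ from (\ref{invar}) together with multiplicativity of $\alpha_H$, the first expression rewrites as $\alpha_H^2(R^2h_2)\ot (R^1h_1)\cdot \alpha_M(m)$; then the identity $R^2h_2\ot R^1h_1=h_1R^2\ot h_2R^1$, which is (\ref{homQT3}) with its two tensor slots swapped, converts this into $\alpha_H^2(h_1R^2)\ot (h_2R^1)\cdot \alpha_M(m)=\alpha_H^2(h_1)\alpha_H^2(R^2)\ot (h_2R^1)\cdot \alpha_M(m)$, matching the right-hand side. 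The principal technical obstacle is the bookkeeping of $\alpha_H$-powers, which only balance thanks to (\ref{invar}).

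For part (ii), the module structure on $M\ot N$ is the same in both descriptions (the diagonal action $h\cdot (m\ot n)=h_1\cdot m\ot h_2\cdot n$), so it suffices to compare the two coactions. The coaction from (i) on the module $M\ot N$ sends $m\ot n$ to $\alpha_H(R^2)\ot (R^1{}_1\cdot m\ot R^1{}_2\cdot n)$, whereas the coaction of $M\hot N$ from Proposition~\ref{deftensprod} sends it to $\alpha_H^{-1}(R^2r^2)\ot (R^1\cdot m\ot r^1\cdot n)$. Using the identity $\alpha_H(R^2)\ot R^1{}_1\ot R^1{}_2=R^2r^2\ot \alpha_H(R^1)\ot \alpha_H(r^1)$ obtained by swapping slots in (\ref{homQT1}), and then shifting $\alpha_H$ via (\ref{invar}) separately on each copy of $R$ and $r$ so that $R^2r^2\ot \alpha_H(R^1)\ot \alpha_H(r^1)=\alpha_H^{-1}(R^2r^2)\ot R^1\ot r^1$, one sees that the two coactions coincide. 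Once part (i) is in place, this is a routine matching of formulas.
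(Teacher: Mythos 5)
Your overall route coincides with the paper's: for (i) you verify (\ref{leftcom2}) via (\ref{homQT2}) together with (\ref{hommod1}), (\ref{hommod2}) and (\ref{invar}), you prove the compatibility (\ref{homYD}) by passing through the same pivot expression $\alpha_H^2(R^2h_2)\ot (R^1h_1)\cdot \alpha_M(m)$ that the paper reaches and flipping it with (\ref{homQT3}), and for (ii) you reduce everything to the comparison of the two coactions on $M\ot N$, which is settled by (\ref{homQT1}) plus (\ref{invar}) exactly as in the paper. Part (ii) of your argument is correct as written, since bijectivity of $\alpha_H$ is part of the hypotheses there.

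There is, however, a genuine defect in your proof of part (i): the statement of (i) does \emph{not} assume $\alpha_H$ bijective (that assumption is introduced only in (ii)), yet your argument for (i) repeatedly invokes $\alpha_H^{-1}$ --- in the preamble identity $R^1\ot \alpha_H^2(R^2)=\alpha_H^{-1}(R^1)\ot \alpha_H(R^2)$, in the rewriting $r^1\cdot (R^1\cdot m)=(\alpha_H^{-1}(r^1)R^1)\cdot \alpha_M(m)$ used for (\ref{leftcom2}), and in the step producing $(\alpha_H^{-1}(R^1)h_1)\cdot \alpha_M(m)$ in the Yetter--Drinfeld check. Under the hypotheses of (i) these expressions are undefined, so those steps fail. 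The repair is mechanical and is precisely what the paper's proof does: use (\ref{invar}) only in the power-raising direction, i.e.\ substitute $R^1\ot R^2=\alpha_H(R^1)\ot \alpha_H(R^2)$ so as to replace $\alpha_H(R^2)\ot R^1\cdot x$ by $\alpha_H^2(R^2)\ot \alpha_H(R^1)\cdot x$ \emph{before} applying (\ref{hommod2}); then (\ref{hommod2}) applies directly, e.g.\ $\alpha_H(R^1)\cdot (h_1\cdot m)=(R^1h_1)\cdot \alpha_M(m)$, and no inverse is ever needed. Since each $\alpha_H^{-1}$ you introduce is immediately cancelled by the invariance shift you perform right afterwards, reordering these two moves turns your proof of (i) into a valid one (indeed, into the paper's proof); as it stands, though, you have only established (i) under an invertibility assumption that the statement does not grant.
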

\begin{proof}
(i) First we have to prove that $(M, \alpha _M)$ is a left $H$-comodule; 
(\ref{leftcom1}) is easy and left to the reader, we check (\ref{leftcom2}): 
\begin{eqnarray*}
(\Delta _H\ot \alpha _M)(\lambda _M(m))&=&\Delta _H(\alpha _H(R^2))\ot 
\alpha _M(R^1\cdot m)\\
&\overset{(\ref{hombia3}),\; (\ref{hommod1})}{=}&\alpha _H(R^2_1)\ot 
\alpha _H(R^2_2)\ot \alpha _H(R^1)\cdot \alpha _M(m)\\
&\overset{(\ref{homQT2})}{=}&\alpha _H^2(r^2)\ot \alpha _H^2(R^2)\ot 
(R^1r^1)\cdot \alpha _M(m)\\
&\overset{(\ref{hommod2})}{=}&\alpha _H^2(r^2)\ot \alpha _H^2(R^2)\ot 
\alpha _H(R^1)\cdot (r^1\cdot m)\\
&\overset{(\ref{invar})}{=}&\alpha _H^2(r^2)\ot \alpha _H(R^2)\ot 
R^1\cdot (r^1\cdot m)\\
&=&\alpha _H^2(r^2)\ot \lambda _M(r^1\cdot m)\\
&=& (\alpha _H\ot \lambda _M)(\lambda _M(m)), \;\;\;q.e.d.
\end{eqnarray*}
Now we check the Yetter-Drinfeld condition (\ref{homYD}):
\begin{eqnarray*}
(h_1\cdot m)_{(-1)}\alpha _H^2(h_2)\ot (h_1\cdot m)_{(0)}&=&
\alpha _H(R^2)\alpha _H^2(h_2)\ot R^1\cdot (h_1\cdot m)\\
&\overset{(\ref{invar})}{=}&\alpha _H^2(R^2)\alpha _H^2(h_2)\ot 
\alpha _H(R^1)\cdot (h_1\cdot m)\\
&\overset{(\ref{hommod2})}{=}&\alpha _H^2(R^2h_2)\ot 
(R^1h_1)\cdot \alpha _M(m)\\
&\overset{(\ref{homQT3})}{=}&\alpha _H^2(h_1R^2)\ot 
(h_2R^1)\cdot \alpha _M(m)\\
&\overset{(\ref{hommod2})}{=}&\alpha _H^2(h_1)\alpha _H^2(R^2)\ot 
\alpha _H(h_2)\cdot (R^1\cdot m)\\
&=&\alpha _H^2(h_1)\alpha _H(m_{(-1)})\ot \alpha _H(h_2)\cdot m_{(0)}, \;\;\;q.e.d.
\end{eqnarray*}
(ii) We only need to prove that the two comodule structures on $M\ot N$ coincide, 
that is, for all $m\in M$, $n\in N$, 
\begin{eqnarray*}
&&\alpha _H^{-2}(m_{(-1)}n_{(-1)})\ot (m_{(0)}\ot n_{(0)})=
\alpha _H(R^2)\ot R^1\cdot (m\ot n), 
\end{eqnarray*}
that is 
\begin{eqnarray*}
&&\alpha _H^{-2}(\alpha _H(R^2)\alpha _H(r^2))\ot (R^1\cdot m\ot r^1\cdot n)=
\alpha _H(R^2)\ot (R^1_1\cdot m\ot R^1_2\cdot n), 
\end{eqnarray*}
which, because of (\ref{invar}), is equivalent to 
\begin{eqnarray*}
&&R^2r^2\ot (\alpha _H(R^1)\cdot m\ot \alpha _H(r^1)\cdot n)=
\alpha _H(R^2)\ot (R^1_1\cdot m\ot R^1_2\cdot n), 
\end{eqnarray*}
and this is an obvious consequence of (\ref{homQT1}). 
\end{proof}

As a consequence of various results obtained so far, we also obtain the following: 
\begin{theorem}
Let $(H, \mu _H, \Delta _H, \alpha _H, R)$ be a quasitriangular Hom-bialgebra with 
$\alpha _H$ bijective and 
$(\alpha _H\ot \alpha _H)(R)=R$. Denote by $_H{\mathcal M}$ the category whose 
objects are left $H$-modules $(M, \alpha _M)$ with $\alpha _M$ bijective and morphisms are 
morphisms of left $H$-modules. Then $_H{\mathcal M}$ is a quasi-braided pre-tensor 
subcategory of $_H^H{\mathcal YD}$, with tensor product defined as 
in Proposition \ref{tensprodmod} (i), associativity constraints defined by the formula 
$a_{M, N, P}((m\ot n)\ot p)=\alpha _M^{-1}(m)\ot (n\ot \alpha _P(p))$, for $M, N, P\in \;
_H{\mathcal M}$, and quasi-braiding $c_{M, N}:M\ot N\rightarrow N\ot M$, 
$c_{M, N}(m\ot n)=\alpha _N^{-1}(R^2\cdot n)\ot \alpha _M^{-1}(R^1\cdot m)$, for all 
$M, N\in \;_H{\mathcal M}$.
\end{theorem}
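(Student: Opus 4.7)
The plan is to use Proposition \ref{quasitriang} as the bridge: every object $(M,\alpha_M)$ of $_H{\mathcal M}$ becomes an object of $_H^H{\mathcal YD}$ via the induced coaction $\lambda_M(m)=\alpha_H(R^2)\ot R^1\cdot m$ from part (i), and part (ii) shows this assignment is compatible with tensor products. Thus the inclusion $_H{\mathcal M}\hookrightarrow\;_H^H{\mathcal YD}$ is defined on objects, and the tensor product from Proposition \ref{tensprodmod}(i) coincides with $\hot$. What needs to be checked is that (a) morphisms in $_H{\mathcal M}$ are morphisms in $_H^H{\mathcal YD}$, (b) the associativity constraint $a_{M,N,P}$ of $_H^H{\mathcal YD}$ coincides with the given formula on this subcategory, and (c) the quasi-braiding $c_{M,N}$ of $_H^H{\mathcal YD}$ takes the announced form $c_{M,N}(m\ot n)=\alpha_N^{-1}(R^2\cdot n)\ot \alpha_M^{-1}(R^1\cdot m)$.

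For (a), if $f:M\to N$ is a morphism of left $H$-modules with $\alpha_N\circ f=f\circ \alpha_M$ and $f(h\cdot m)=h\cdot f(m)$, then
\begin{eqnarray*}
\lambda_N(f(m))=\alpha_H(R^2)\ot R^1\cdot f(m)=\alpha_H(R^2)\ot f(R^1\cdot m)=(id_H\ot f)\lambda_M(m),
\end{eqnarray*}
so $f$ is automatically $H$-colinear and hence a morphism in $_H^H{\mathcal YD}$. For (b), the formula for $a_{M,N,P}$ in Proposition \ref{assoccon} depends only on the maps $\alpha_M,\alpha_N,\alpha_P$ and is literally the one stated here; since Proposition \ref{assoccon} already shows it is an isomorphism of both modules and comodules (and, in particular, of modules), it restricts to the desired constraint in $_H{\mathcal M}$.

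For (c), starting from formula \eqref{defc} and substituting the induced coaction $m_{(-1)}\ot m_{(0)}=\alpha_H(R^2)\ot R^1\cdot m$ gives
\begin{eqnarray*}
c_{M,N}(m\ot n)=\alpha_N^{-1}\bigl(\alpha_H^{-1}(\alpha_H(R^2))\cdot n\bigr)\ot \alpha_M^{-1}(R^1\cdot m)=\alpha_N^{-1}(R^2\cdot n)\ot \alpha_M^{-1}(R^1\cdot m),
\end{eqnarray*}
exactly the announced formula, and Proposition \ref{defbraiding} guarantees this is $H$-linear and $H$-colinear. The pentagon, hexagonal, and braid axioms all hold automatically because they hold in the ambient category $_H^H{\mathcal YD}$.

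The only genuinely nonformal step is verifying that the Yetter-Drinfeld structure induced by $R$ agrees with the tensor-product Yetter-Drinfeld structure built from Proposition \ref{deftensprod}; but this is precisely the content of Proposition \ref{quasitriang}(ii), whose proof uses \eqref{invar} and \eqref{homQT1}. Once that compatibility is in hand, the proof reduces to reading off the formulas, and no new calculation is required. I expect no serious obstacle beyond bookkeeping.
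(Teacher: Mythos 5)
Your proposal is correct and follows essentially the same route the paper intends: the paper offers no written proof for this theorem precisely because it is, as you show, a direct assembly of Proposition \ref{quasitriang} (which embeds modules into $_H^H{\mathcal YD}$ and identifies the two tensor-product structures), the automatic $H$-colinearity of $H$-linear maps for the $R$-induced coaction, and the substitution of that coaction into the formulas for $a_{M,N,P}$ and $c_{M,N}$ from Propositions \ref{assoccon} and \ref{defbraiding}. Your explicit verification of points (a)--(c) fills in exactly the bookkeeping the paper leaves to the reader, so there is no gap.
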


We recall the following result  (\cite{homquantum1}, Theorem 4.4):
\begin{proposition} Let 
$(H, \mu _H, \Delta _H, \alpha _H, R)$ be a quasitriangular Hom-bialgebra such that  
$(\alpha _H\ot \alpha _H)(R)=R$ and $(M, \alpha _M)$ a left $H$-module. Then the linear 
map $B:M\ot M\rightarrow M\ot M$, $B(m\ot m')=R^2\cdot m'\ot R^1\cdot m$ is a 
solution of the HYBE for $(M, \alpha _M)$. 
\end{proposition}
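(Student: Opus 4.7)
The plan is to deduce this statement as an immediate corollary of the Yetter-Drinfeld machinery already developed in the paper, rather than verifying the HYBE directly from scratch. The key observation is that Proposition \ref{quasitriang}(i) tells us precisely how to promote the left $H$-module $(M, \alpha_M)$ to a Yetter-Drinfeld module over $H$, using the coaction $\lambda_M(m) = \alpha_H(R^2) \ot R^1 \cdot m$. Once this is in place, the HYBE for an explicit map on $M \ot M$ is essentially free of charge from Proposition \ref{BYD}.

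Concretely, I would apply Proposition \ref{BYD} with the three Yetter-Drinfeld modules all taken to be this $(M, \alpha_M)$; this yields a linear map $B_{M, M}:M\ot M\to M\ot M$ satisfying the HYBE (\ref{YDhombraid}). It then remains only to match $B_{M,M}$ with the map $B$ in the statement. Substituting $\lambda_M$ into formula (\ref{defB}) gives
\begin{eqnarray*}
B_{M, M}(m\ot m') = \alpha_H^{-1}(m_{(-1)})\cdot m' \ot m_{(0)} = \alpha_H^{-1}(\alpha_H(R^2))\cdot m' \ot R^1\cdot m = R^2\cdot m' \ot R^1\cdot m,
\end{eqnarray*}
which is exactly $B(m\ot m')$. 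The HYBE for $B$ is therefore just the HYBE for $B_{M, M}$.

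The one mild obstacle is a bookkeeping issue: Proposition \ref{BYD} was formulated under the standing assumption that $\alpha_H$ is bijective, while the statement to be proved makes no such assumption. The computation above shows that the $\alpha_H^{-1}$ appearing in the definition of $B_{M,M}$ is cancelled by the $\alpha_H$ built into $\lambda_M$, so the map $B$ itself is well-defined without bijectivity. To make the argument rigorous in this generality, I would rerun the proof of Proposition \ref{BYD} with all occurrences of $(m_{(-1)}, m_{(0)})$ replaced throughout by $(\alpha_H(R^2), R^1\cdot m)$; every $\alpha_H^{-1}$ is then immediately absorbed, and the residual calculation uses only the QT axioms (\ref{homQT1})--(\ref{homQT3}), the invariance $(\alpha_H\ot \alpha_H)(R)=R$, and the Hom-module axioms (\ref{hommod1})--(\ref{hommod2}). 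This would give a direct, bijectivity-free derivation of the HYBE for $B$ on the quasitriangular side.
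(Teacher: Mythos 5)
Your reduction is precisely the one the paper itself records: the paper does not prove this proposition at all (it is recalled from \cite{homquantum1}, Theorem 4.4), and its sole comment on it is exactly your first two paragraphs, namely that for $\alpha_H$ bijective the result is a particular case of Proposition \ref{BYD} via Proposition \ref{quasitriang}(i), with the map (\ref{defB}) collapsing to $B(m\ot m')=R^2\cdot m'\ot R^1\cdot m$ just as you compute. The only divergence is in how the missing bijectivity hypothesis is handled. The paper sidesteps it: the general statement is attributed to Yau, and only the bijective case is claimed as a consequence of its own machinery. You instead propose to rerun the proof of Proposition \ref{BYD} with the explicit coaction substituted throughout. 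That is the right idea, and it essentially reconstructs Yau's original direct computation, but be aware that it is not a pure cancellation exercise: at the steps where the proof of Proposition \ref{BYD} uses (\ref{leftcom1}) or (\ref{leftcom2}) to move an $\alpha_H^{-1}$ past a coaction, the substituted identity you actually need is an instance of the invariance condition (\ref{invar}) combined with (\ref{hommod1}), for example $\alpha_H(R^2)\cdot x\ot\alpha_H(R^1)\cdot y=R^2\cdot x\ot R^1\cdot y$, and the axioms (\ref{leftcom1}), (\ref{leftcom2}) and (\ref{homYD}) for the substituted structure must be quoted in the bijectivity-free form in which Proposition \ref{quasitriang}(i) establishes them (that proposition, fortunately, does not assume $\alpha_H$ bijective in part (i)). So, as written, your argument rigorously proves the bijective case, which is the same ground the paper covers, while the fully general case remains a plausible but unexecuted sketch.
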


It turns out that the particular case of this result in which $\alpha _H$ is bijective is a particular 
case of Proposition \ref{BYD}, via Proposition \ref{quasitriang}.
%%%%%%%%%%%%%%%%%%%%%%%%%%%%%%%%%%%%%%
\section{The quasi-braided pre-tensor category ($_H^H{\mathcal YD}$, $\tot $, $\mathfrak{a}$, $c$)}
%%%%%%%%%%%%%%%%%%%%%%%%%%%%%
\setcounter{equation}{0}
%%%%%%%%%%%%%%%%%%%%%%%%%%%%
${\;\;\;}$We have seen in the previous section that modules over quasitriangular Hom-bialgebras 
become Yetter-Drinfeld modules. Similarly, comodules over coquasitriangular Hom-bialgebras 
become Yetter-Drinfeld modules; inspired by this, we can introduce a second quasi-braided 
pre-tensor  category structure on $_H^H{\mathcal YD}$. We include these facts here for 
completeness. Each of the next results is the analogue of a result in the previous section; 
their proofs are similar to those of their analogues and are left to the reader. 
\begin{theorem} \label{mainmirror}
Let $(H, \mu _H, \Delta _H, \alpha _H)$ be a Hom-bialgebra such that $\alpha _H$ is bijective. \\
(i) Let  
$(M, \alpha _M)$ and $(N, \alpha _N)$ be two Yetter-Drinfeld modules over $H$, with notation 
as above, and define the linear maps 
\begin{eqnarray*}
&&H\ot (M\ot N)\rightarrow M\ot N, \;\;\;h\ot (m\ot n)\mapsto \alpha _H^{-2}(h_1)\cdot m
\ot \alpha _H^{-2}(h_2)\cdot n, \\
&&M\ot N\rightarrow H\ot (M\ot N), \;\;\;m\ot n\mapsto m_{(-1)}n_{(-1)}\ot 
(m_{(0)}\ot n_{(0)}). 
\end{eqnarray*}
Then $(M\ot N, \alpha _M\ot \alpha _N)$ with these structures becomes a Yetter-Drinfeld module 
over $H$, 
denoted in what follows by $M\tot N$. \\
(ii) $_H^H{\mathcal YD}$ is a quasi-braided pre-tensor category, with tensor product 
$\tot $ as in (i) and associativity constraints $\mfa _{M, N, P}$ and quasi-braiding $c_{M, N}$ defined 
as follows:
\begin{eqnarray*}
&&\mfa _{M, N, P}:(M\tot N)\tot P\rightarrow M\tot (N\tot P), \;\;\;
\mfa _{M, N, P}((m\ot n)\ot p)=\alpha _M(m)\ot (n\ot \alpha _P^{-1}(p)), \\
&&c_{M, N}:M\tot N\rightarrow N\tot M, \;\;\;c_{M, N}(m\ot n)=
\alpha _N^{-1}(\alpha _H^{-1}(m_{(-1)})\cdot n)\ot \alpha _M^{-1}(m_{(0)}).
\end{eqnarray*}
\end{theorem}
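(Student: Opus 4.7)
The strategy is to mirror the development of Section~3, exploiting the fact that, thanks to (\ref{hombia1}), the twist by $\alpha_H^{-2}$ that appears in the coaction of $M\hot N$ can be transferred to the action, producing the tensor product $M\tot N$ and, correspondingly, an inverted associator. Most of the identities to be checked will then be routine adaptations of the corresponding ones from Section~3, with the roles of ``standard action + twisted coaction'' and ``twisted action + standard coaction'' swapped.

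For part (i), I would first verify that $(M\ot N, \alpha_M\ot \alpha_N)$ with the displayed formula is a left $H$-module: (\ref{hommod1}) reduces, after using (\ref{hombia3}) and multiplicativity of $\alpha_H$, to (\ref{hommod1}) applied on each factor, while Hom-associativity (\ref{hommod2}) is obtained by expanding both sides, using (\ref{hombia2}), multiplicativity of $\alpha_H$ and (\ref{hommod2}) on each factor. The comodule axioms (\ref{leftcom1}), (\ref{leftcom2}) for the coaction $m\ot n\mapsto m_{(-1)}n_{(-1)}\ot m_{(0)}\ot n_{(0)}$ follow from (\ref{hombia3}), from (\ref{leftcom2}) applied on each factor, and from the multiplicativity of $\Delta_H$. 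The essential step is the Yetter-Drinfeld compatibility (\ref{homYD}) for $M\tot N$: one expands the left-hand side using the new action and coaction formulas, and reduces it to the right-hand side by invoking (\ref{homYD}) once for $M$ and once for $N$, balancing the occurring $\alpha_H^{\pm 2}$ twists via (\ref{hombia1}), (\ref{hombia3}) and the multiplicativity of $\alpha_H$. This is precisely the analogue, with action and coaction interchanged, of the computation already carried out in Proposition~\ref{deftensprod}.

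For part (ii), the pentagon identity for $\mfa_{M,N,P}$ follows by direct inspection: both sides send $((m\ot n)\ot p)\ot q$ to the same four-fold tensor (namely $\alpha_M^2(m)\ot \alpha_N(n)\ot \alpha_P^{-1}(p)\ot \alpha_Q^{-2}(q)$, up to the evident re-bracketing). The $H$-linearity of $\mfa_{M,N,P}$ uses (\ref{hombia1}) to migrate the $\alpha_H^{-2}$ between the two halves of $\Delta_H$, while its $H$-colinearity is an application of (\ref{leftcom2}) on each factor. Since the formula for $c_{M,N}$ is identical to (\ref{defc}), its $H$-linearity and $H$-colinearity are established by essentially the same computation as in Proposition~\ref{defbraiding}, now using the action and coaction formulas for $\tot$; the two hexagonal relations are obtained by the same three-fold expansions as in the proof of the first theorem of Section~3, again swapping the formulas.

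The main obstacle, exactly as in Section~3, will be the Yetter-Drinfeld compatibility in part (i) together with the two hexagons in part (ii): these are lengthy but mechanical identities requiring careful bookkeeping of the $\alpha_H^{\pm 2}$ twists and well-timed invocations of (\ref{homYD}) on each of $M, N, P$. Beyond this bookkeeping, no conceptually new argument is needed.
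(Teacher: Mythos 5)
Your proposal is correct and takes essentially the same route as the paper: the paper gives no separate proof of Theorem \ref{mainmirror}, stating only that each result of Section 4 is the analogue of one in Section 3 and that the proofs are ``similar to those of their analogues and left to the reader,'' which is exactly the mirroring you carry out (and your explicit pentagon computation, yielding $\alpha_M^2(m)\ot\alpha_N(n)\ot\alpha_P^{-1}(p)\ot\alpha_Q^{-2}(q)$ on both sides, is accurate). The only quibble is a minor misattribution: the $H$-colinearity of $\mfa_{M,N,P}$ rests on Hom-associativity of $\mu_H$ together with (\ref{leftcom1}) rather than on (\ref{leftcom2}) applied factorwise, but this does not affect the validity of the argument.
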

\begin{proposition} \label{connectcoquasi}
Let $(H, \mu _H, \Delta _H, \alpha _H, \sigma )$ be a coquasitriangular Hom-bialgebra 
satisfying the condition 
$\sigma =\sigma \circ (\alpha _H\ot \alpha _H)$. \\
(i) Let $(M, \alpha _M)$ be a left $H$-comodule with coaction $M\rightarrow H\ot M$, 
$m\mapsto m_{(-1)}\ot m_{(0)}$. Define the linear map $H\ot M\rightarrow M$,  
$h\ot m\mapsto h\cdot m:=\sigma (m_{(-1)}\ot \alpha _H(h))m_{(0)}$. 
Then $(M, \alpha _M)$ with these structures is a Yetter-Drinfeld module over $H$.  \\
(ii) Assume that $\alpha _H$ is bijective. Let $(N, \alpha _N)$ be another left $H$-comodule 
with coaction $N\rightarrow H\ot N$, $n\mapsto n_{(-1)}\ot n_{(0)}$, 
regarded as a Yetter-Drinfeld module 
as in (i), via the map $H\ot N\rightarrow N$,  
$h\ot n\mapsto h\cdot n:=\sigma (n_{(-1)}\ot \alpha _H(h))n_{(0)}$.
We regard 
$(M\ot N, \alpha _M\ot \alpha _N)$ as a left $H$-comodule via the standard coaction 
$M\ot N\rightarrow H\ot (M\ot N)$, 
$m\ot n\mapsto m_{(-1)}n_{(-1)}\ot (m_{(0)}\ot n_{(0)})$ 
and then we regard 
$(M\ot N, \alpha _M\ot \alpha _N)$ as a Yetter-Drinfeld module as in (i). Then this 
Yetter-Drinfeld module coincides with the 
Yetter-Drinfeld module $M\tot N$ defined in Theorem \ref{mainmirror}.
\end{proposition}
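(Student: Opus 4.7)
The plan is to treat (i) and (ii) separately, exploiting the duality with Proposition \ref{quasitriang} and using the three coquasitriangularity axioms together with the invariance $\sigma = \sigma\circ(\alpha_H\ot \alpha_H)$.

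For part (i), I will first check that the formula $h\cdot m := \sigma(m_{(-1)}\ot \alpha_H(h))m_{(0)}$ makes $(M,\alpha_M)$ a left Hom-module over $H$. The multiplicativity (\ref{hommod1}) follows at once from applying (\ref{leftcom1}) to rewrite $\alpha_M(m)_{(-1)}\ot \alpha_M(m)_{(0)}$ as $\alpha_H(m_{(-1)})\ot \alpha_M(m_{(0)})$ and then using the invariance of $\sigma$ to absorb the resulting $\alpha_H$-powers. The Hom-associativity (\ref{hommod2}) I expect to derive by unfolding $\alpha_H(h)\cdot(h'\cdot m)$ and $(hh')\cdot\alpha_M(m)$, applying Hom-coassociativity (\ref{leftcom2}) to $\lambda_M(m)$, and invoking the second defining axiom $\sigma(\alpha_H(x)\ot yz) = \sigma(x_1\ot \alpha_H(z))\sigma(x_2\ot \alpha_H(y))$ to split $\sigma(m_{(-1)}\ot \alpha_H(hh'))$ into the required product of two scalars.

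The heart of the matter is the Yetter-Drinfeld compatibility (\ref{homYD}). After substituting the definition of the action, both sides reduce to expressions of the form ``scalar $\sigma(-\ot -)$ times an element of $H\ot M$''. Using (\ref{hombia1}), (\ref{hombia3}) and (\ref{leftcom2}) I would align the $\alpha_H$-twists and the Sweedler labels of $h$ and $m_{(-1)}$, and then apply the third coquasitriangular axiom $y_1x_1\sigma(x_2\ot y_2) = \sigma(x_1\ot y_1)x_2y_2$ to transport the product $h_2 m_{(-1)_2}$ across the $\sigma$-scalar and convert the left-hand side of (\ref{homYD}) into the right-hand side. This is the step I expect to be the main obstacle, as the joint bookkeeping of $\alpha_H$-powers on the algebra side and on the $\sigma$-arguments must be performed simultaneously; the invariance $\sigma = \sigma\circ(\alpha_H\ot \alpha_H)$ is exactly what is needed to reconcile the two.

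For part (ii), the two $H$-comodule structures on $M\ot N$ under comparison are by definition the standard tensor product coaction, so they coincide, and it suffices to check that the actions agree. The action obtained from (i) applied to the tensor comodule is $h\cdot(m\ot n) = \sigma(m_{(-1)}n_{(-1)}\ot \alpha_H(h))(m_{(0)}\ot n_{(0)})$, while the $\tot$-action expands to
\[
\sigma(m_{(-1)}\ot \alpha_H^{-1}(h_1))\,\sigma(n_{(-1)}\ot \alpha_H^{-1}(h_2))\,(m_{(0)}\ot n_{(0)}).
\]
A single use of the first coquasitriangular axiom $\sigma(xy\ot \alpha_H(z))=\sigma(\alpha_H(x)\ot z_1)\sigma(\alpha_H(y)\ot z_2)$, combined with the invariance of $\sigma$ to strip the extra $\alpha_H$'s from the first slots, identifies the two expressions. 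This step is routine once (i) has been established.
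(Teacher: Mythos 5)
Your plan is correct: the paper itself gives no written proof here (it explicitly leaves it to the reader as the dual of Proposition \ref{quasitriang}), and your outline is precisely that dualization, with the right identity invoked at each step — (\ref{leftcom1}) plus invariance for (\ref{hommod1}), (\ref{leftcom2}) plus the second coquasitriangular axiom for (\ref{hommod2}), the third axiom together with (\ref{hombia3}), (\ref{leftcom2}) and invariance for (\ref{homYD}), and the first axiom plus invariance for the action comparison in (ii), where you also correctly observe that the two coactions on $M\ot N$ coincide by definition so only the actions need checking. I verified each of these steps closes as you describe, so your proof matches the paper's intended argument.
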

\begin{theorem}
Let $(H, \mu _H, \Delta _H, \alpha _H, \sigma)$ be a coquasitriangular Hom-bialgebra with 
$\alpha _H$ bijective and 
$\sigma =\sigma \circ (\alpha _H\ot \alpha _H)$. Denote by $^H{\mathcal M}$ the category whose 
objects are left $H$-comodules $(M, \alpha _M)$ with $\alpha _M$ bijective and morphisms are 
morphisms of left $H$-comodules. Then $^H{\mathcal M}$ is a quasi-braided pre-tensor 
subcategory of $_H^H{\mathcal YD}$, with tensor product defined as 
in Proposition \ref{tensprodmod} (ii), associativity constraints defined by the formula 
$\mfa _{M, N, P}((m\ot n)\ot p)=\alpha _M(m)\ot (n\ot \alpha _P^{-1}(p))$, for $M, N, P\in \;
^H{\mathcal M}$, and quasi-braiding $c_{M, N}:M\ot N\rightarrow N\ot M$, 
$c_{M, N}(m\ot n)=\sigma (n_{(-1)}\ot m_{(-1)})\alpha _N^{-1}(n_{(0)})\ot 
\alpha _M^{-1}(m_{(0)})$, for all 
$M, N\in \;^H{\mathcal M}$.
\end{theorem}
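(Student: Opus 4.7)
The plan is to use Proposition~\ref{connectcoquasi} to realize $^H{\mathcal M}$ as a subcategory of $_H^H{\mathcal YD}$ and then transport the tensor product, associators and quasi-braiding from the latter. On objects, given $(M,\alpha_M)\in\;^H{\mathcal M}$, Proposition~\ref{connectcoquasi}(i) endows it with the left $H$-action $h\cdot m=\sigma(m_{(-1)}\ot\alpha_H(h))m_{(0)}$, turning it into an object of $_H^H{\mathcal YD}$; this is where the hypothesis $\sigma=\sigma\circ(\alpha_H\ot\alpha_H)$ enters, already absorbed in Proposition~\ref{connectcoquasi}.

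On morphisms, I need to verify that a morphism $g:M\rightarrow N$ of left $H$-comodules is automatically $H$-linear with respect to the induced actions, so that the inclusion on objects extends to a functor. This is immediate from colinearity, which gives $g(m)_{(-1)}\ot g(m)_{(0)}=m_{(-1)}\ot g(m_{(0)})$, so that
\begin{eqnarray*}
g(h\cdot m)&=&g(\sigma(m_{(-1)}\ot\alpha_H(h))m_{(0)})=\sigma(m_{(-1)}\ot\alpha_H(h))g(m_{(0)})\\
&=&\sigma(g(m)_{(-1)}\ot\alpha_H(h))g(m)_{(0)}=h\cdot g(m).
\end{eqnarray*}

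Next I would check that the three structures match. Proposition~\ref{connectcoquasi}(ii) says exactly that the Yetter-Drinfeld module $M\tot N$ from Theorem~\ref{mainmirror} coincides with the one obtained by first forming the standard tensor-product left $H$-comodule of Proposition~\ref{tensprodmod}(ii) and then applying Proposition~\ref{connectcoquasi}(i); hence the inclusion preserves the tensor product. The associator formula $\mfa_{M,N,P}((m\ot n)\ot p)=\alpha_M(m)\ot(n\ot\alpha_P^{-1}(p))$ is literally the same in both categories. For the quasi-braiding, a direct substitution in the formula of Theorem~\ref{mainmirror}(ii) gives $\alpha_H^{-1}(m_{(-1)})\cdot n=\sigma(n_{(-1)}\ot m_{(-1)})n_{(0)}$, and then applying $\alpha_N^{-1}$ yields the stated formula $c_{M,N}(m\ot n)=\sigma(n_{(-1)}\ot m_{(-1)})\alpha_N^{-1}(n_{(0)})\ot\alpha_M^{-1}(m_{(0)})$.

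There is essentially no obstacle beyond this bookkeeping: the real work---verifying that the action of Proposition~\ref{connectcoquasi}(i) satisfies the Yetter-Drinfeld compatibility, and establishing the pentagon and hexagon axioms in $_H^H{\mathcal YD}$---has already been carried out in the preceding results, so this theorem amounts to assembling those results together with the colinear-implies-linear observation and the substitution above.
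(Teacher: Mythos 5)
Your proposal is correct and is essentially the paper's intended argument: the paper explicitly leaves this proof to the reader as the analogue of the quasitriangular case, namely exactly the assembly you perform of Proposition~\ref{connectcoquasi} (parts (i) and (ii)) with the category structure of Theorem~\ref{mainmirror}. Your two explicit verifications --- that a colinear map commuting with the structure maps is automatically linear for the induced action $h\cdot m=\sigma (m_{(-1)}\ot \alpha _H(h))m_{(0)}$, and that substituting this action into the quasi-braiding of Theorem~\ref{mainmirror}(ii) yields $c_{M, N}(m\ot n)=\sigma (n_{(-1)}\ot m_{(-1)})\alpha _N^{-1}(n_{(0)})\ot \alpha _M^{-1}(m_{(0)})$ --- are precisely the bookkeeping steps the paper omits, and both are carried out correctly.
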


We recall the following result  (\cite{homquantum2}, Theorem 7.4):
\begin{proposition} Let 
$(H, \mu _H, \Delta _H, \alpha _H, \sigma )$ be a coquasitriangular Hom-bialgebra such that  
$\sigma =\sigma \circ (\alpha _H\ot \alpha _H)$. 
If $(M, \alpha _M)$, $(N, \alpha _N)$ are left $H$-comodules, we define the linear map 
\begin{eqnarray*}
&&B_{M, N}:M\ot N\rightarrow N\ot M, \;\;\;B_{M, N}(m\ot n)=\sigma (n_{(-1)}\ot 
m_{(-1)})n_{(0)}\ot m_{(0)}. 
\end{eqnarray*}
Then $(\alpha _N\ot \alpha _M)\circ B_{M, N}=B_{M, N}\circ 
(\alpha _M\ot \alpha _N)$ and, 
if $(P, \alpha _P)$ is another left $H$-comodule, the maps $B_{-, -}$ 
satisfy the HYBE
\begin{eqnarray*}
&&(\alpha _P\ot B_{M, N})\circ (B_{M, P}\ot \alpha _N)\circ (\alpha _M\ot B_{N, P})\nonumber \\
&&\;\;\;\;\;\;\;\;\;\;\;\;\;=(B_{N, P}\ot \alpha _M)\circ (\alpha _N\ot B_{M, P})
\circ (B_{M, N}\ot \alpha _P). 
\end{eqnarray*}
\end{proposition}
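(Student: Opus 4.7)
The plan is to recognize this proposition as the coquasitriangular mirror of Proposition \ref{BYD}, and to prove it by reduction to that proposition in the bijective case, supplemented by a direct computation when $\alpha_H$ is not assumed bijective.

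First I would observe that, when $\alpha_H$ is bijective, Proposition \ref{connectcoquasi}(i) turns each left $H$-comodule $(M,\alpha_M)$ into a Yetter-Drinfeld module over $H$ with action $h\cdot m=\sigma(m_{(-1)}\ot \alpha_H(h))m_{(0)}$. Substituting this action into the $B_{M,N}$ of Proposition \ref{BYD} yields
\[
\alpha_H^{-1}(m_{(-1)})\cdot n\ot m_{(0)} \;=\; \sigma\bigl(n_{(-1)}\ot \alpha_H(\alpha_H^{-1}(m_{(-1)}))\bigr)\,n_{(0)}\ot m_{(0)} \;=\; \sigma(n_{(-1)}\ot m_{(-1)})\,n_{(0)}\ot m_{(0)},
\]
which coincides with the $B_{M,N}$ of the present statement. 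Hence in this case both the $\alpha$-compatibility and the HYBE are immediate consequences of Proposition \ref{BYD}.

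For the general (not necessarily bijective) case I would verify the two claims directly. The equality $(\alpha_N\ot \alpha_M)\circ B_{M,N}=B_{M,N}\circ (\alpha_M\ot \alpha_N)$ is short: apply $\alpha_N\ot \alpha_M$ to the right-hand side, use $\alpha$-linearity of the coactions (\ref{leftcom1}) to pull $\alpha_H$ into the coaction legs, and then absorb the resulting twist in the scalar via $\sigma=\sigma\circ(\alpha_H\ot \alpha_H)$.

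For the HYBE I would expand both sides of the identity on $m\ot n\ot p$, collecting the three factors of $\sigma$ against a triple coaction on $m,n,p$. The main obstacle is the bookkeeping: the $\alpha_H$-twists coming from iterated coactions have to be balanced against the invariance of $\sigma$, and the rearrangement of the resulting $\sigma$-factors on the two sides must be carried out via the three coquasitriangularity axioms, namely the two multiplicativity axioms of $\sigma$ and the commutation relation $y_1x_1\sigma(x_2\ot y_2)=\sigma(x_1\ot y_1)x_2y_2$. Once one sees that the left-hand side produces scalars of the form $\sigma(p_{(-1)}\ot m_{(-1)})\sigma(p_{(-1)}\ot n_{(-1)})\sigma(n_{(-1)}\ot m_{(-1)})$ (up to $\alpha_H$-twists) and that the right-hand side produces the same ones written via the opposite grouping of coactions, Hom-coassociativity (\ref{leftcom2}) and the three axioms for $\sigma$ match the two expressions term by term. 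Since the bijective case is already covered by Proposition \ref{BYD}, this direct verification is essentially a transcription of the argument of Proposition \ref{BYD} with the action replaced by its $\sigma$-expression, so no genuinely new difficulty arises beyond the notational bookkeeping.
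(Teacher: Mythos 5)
The paper itself does not prove this proposition: it is recalled verbatim from \cite{homquantum2} (Theorem 7.4), and the only argument the paper offers for it is the remark immediately following the statement, namely that in the particular case where $\alpha_H$ is bijective the result is a consequence of Proposition \ref{BYD} via Proposition \ref{connectcoquasi}. Your first paragraph is exactly that reduction, carried out correctly: substituting the action $h\cdot n=\sigma(n_{(-1)}\ot \alpha_H(h))n_{(0)}$ of Proposition \ref{connectcoquasi}(i) into the map of Proposition \ref{BYD} does give $\sigma(n_{(-1)}\ot m_{(-1)})n_{(0)}\ot m_{(0)}$. Your verification of $(\alpha_N\ot\alpha_M)\circ B_{M,N}=B_{M,N}\circ(\alpha_M\ot\alpha_N)$ in full generality, using (\ref{leftcom1}) and $\sigma=\sigma\circ(\alpha_H\ot\alpha_H)$, is also fine.

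The gap is in your treatment of the HYBE in the general case, which the statement does require, since it assumes no bijectivity of $\alpha_H$ (or of $\alpha_M,\alpha_N,\alpha_P$). Your justification for dismissing the difficulty --- that the direct verification ``is essentially a transcription of the argument of Proposition \ref{BYD} with the action replaced by its $\sigma$-expression'' --- is not sound. Proposition \ref{BYD} is stated and proved only for bijective $\alpha_H$: its map $B_{M,N}$ is defined through $\alpha_H^{-1}$, and its proof applies (\ref{hommod2}) and (\ref{homYD}) to expressions carrying $\alpha_H^{-1}$ and $\alpha_H^{-2}$ at nearly every step, so there is nothing to transcribe once those inverses are unavailable. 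Worse, without bijectivity the present $B_{M,N}$ cannot even be expressed through the Yetter-Drinfeld action of Proposition \ref{connectcoquasi}(i): that action only produces scalars of the form $\sigma(n_{(-1)}\ot\alpha_H(h))$, and obtaining $\sigma(n_{(-1)}\ot m_{(-1)})$ this way requires solving $\alpha_H(h)=m_{(-1)}$, i.e.\ at least surjectivity of $\alpha_H$. Consequently the non-bijective case needs a self-contained computation directly from Hom-coassociativity (\ref{leftcom2}), the invariance $\sigma=\sigma\circ(\alpha_H\ot\alpha_H)$ and the three coquasitriangularity axioms --- which is precisely what the cited Theorem 7.4 of \cite{homquantum2} supplies. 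Your closing sketch names the right ingredients, but it is not carried out, and the bijective case cannot stand in for it; as a proof of the statement as written, the proposal is incomplete.
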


It turns out that the particular case of this result in which $\alpha _H$ is bijective is a particular 
case of Proposition \ref{BYD}, via Proposition \ref{connectcoquasi}.
%%%%%%%%%%%%%%%%%%%%%%%%


\begin{thebibliography}{99}
%%%%%%%%%%%%%%%%%%%%%%%
\bibitem{AizawaSaito}
N. Aizawa, H. Sato, \emph{$q$-deformation of the Virasoro algebra with
central extension}, Phys. Lett. B \textbf{256}(1), 185--190 (1991). N. Aizawa, H. Sato, Hiroshima University
preprint HUPD-9012 (1990).

\bibitem{AEM}
F. Ammar, Z. Ejbehi, A. Makhlouf, \emph{Cohomology and deformations of Hom-algebras}, 
J. Lie Theory \textbf{21}(4), 813--836 (2011).

\bibitem {AmmarMakhlouf2009}
 F. Ammar, A. Makhlouf, \emph{Hom-Lie algebras and Hom-Lie admissible superalgebras}, 
J.  Algebra \textbf{324}(7), 1513--1528 (2010).

\bibitem{BenayadiMakhlouf} 
S. Benayadi, A. Makhlouf,  \emph{Hom-Lie algebras with symmetric invariant nondegenerate bilinear form}, 
J. Geom. Phys. 10.1016/j.geomphys.2013.10.010 (2013).

\bibitem{BEM} 
M. Bordemann, O. Elchinger, A. Makhlouf, \emph{ Twisting Poisson algebras, coPoisson algebras and 
quantization}, Travaux Math\'ematiques,  Volume \textbf{20}, 83--119 (2012).

\bibitem{stef}
S. Caenepeel, I. Goyvaerts, \emph{Monoidal Hom-Hopf algebras}, Comm. Algebra \textbf{39}, 
2216--2240 (2011).

\bibitem{ChaiElinPop}
M. Chaichian, D.  Ellinas, 
Z. Popowicz, \emph{Quantum conformal algebra with central extension}, Phys.
Lett. B \textbf{248}(1-2), 95--99 (1990).

\bibitem{ChaiKuLukPopPresn}
M.  Chaichian, 
A. P.  Isaev, J. Lukierski, Z. Popowi, P. Pre\v{s}najder, 
\emph{$q$-deformations of Virasoro algebra and conformal dimensions}, Phys.
Lett. B \textbf{262}(1), 32--38 (1991).

\bibitem{ChaiIsKuLuk}  
M. Chaichian, P.  Kulish,
J. Lukierski, \emph{$q$-deformed Jacobi identity, $q$-oscillators and
$q$-deformed infinite-dimensional algebras}, Phys. Lett. B
\textbf{237}(3-4), 401--406 (1990).

\bibitem{ChaiPopPres}
M. Chaichian,
Z. Popowicz, P. Pre\v{s}najder,  \emph{$q$-Virasoro algebra and its
relation to the $q$-deformed KdV system}, Phys. Lett. B \textbf{249}(1),
63--65 (1990).

\bibitem{CurtrZachos1}
T. L. Curtright, C. K. Zachos,
\emph{Deforming maps for quantum algebras,} Phys. Lett. B \textbf{243}(3), 
237--244  (1990).

\bibitem{DamKu}
E. V. Damaskinsky, P. P.   Kulish,
\emph{Deformed oscillators and their applications} (Russian),  Zap.\
Nauch.\ Semin.\ LOMI \textbf{189}, 37--74 (1991); Engl. transl. in
J. Soviet Math.  \textbf{62}(5), 2963--2986 (1992).

\bibitem{DaskaloyannisGendefVir}
C.  Daskaloyannis,  \emph{Generalized deformed Virasoro algebras}, Mod. 
Phys. Lett. A \textbf{7}(9), 809--816 (1992).

\bibitem{Elhamdadi-Makhlouf}
M. Elhamdadi, A. Makhlouf, \emph{Hom-quasi-bialgebras}, Contemp. Math. \textbf{ 585} 
(Eds. N. Andruskiewitch, 
J. Cuadra and B. Torrecillas), Amer. Math. Soc., Providence, RI  (2013). 

\bibitem{fgs} 
Y. Fregier, A. Gohr, S. D. Silvestrov, \emph{Unital algebras of Hom-associative type and surjective 
or injective twistings}, J. Gen. Lie Theory Appl. \textbf{3}(4), 285--295 (2009).

\bibitem{Gohr}
A. Gohr, \emph{On Hom-algebras with surjective twisting}, 
J. Algebra \textbf{324}, 1483--1491 (2010).

\bibitem{HLS} 
J. T. Hartwig, D.  Larsson, S. D. Silvestrov,
\emph{Deformations of Lie algebras using $\sigma$-derivations}, J.
Algebra \textbf{295},  314--361 (2006).

\bibitem{Hu} 
N. Hu, \emph{$q$-Witt algebras,
$q$-Lie algebras, $q$-holomorph structure and representations},
Algebra Colloq. \textbf{6}(1), 51--70 (1999).

\bibitem{Kassel1} 
C. Kassel, \emph{Cyclic homology of differential operators,
the Virasoro algebra and a $q$-analogue}, Commun. Math. Phys. \textbf{146}, 
343--351 (1992).

\bibitem{kassel}
C. Kassel, Quantum Groups, {\sl Graduate Texts in Mathematics}
{\bf 155}, Springer Verlag, Berlin, 1995.

\bibitem{LS1}
D.  Larsson, S. D. Silvestrov,
\emph{Quasi-Hom-Lie algebras, central extensions and 2-cocycle-like
identities}, J. Algebra \textbf{288}, 321--344 (2005).

\bibitem{LS2} 
D. Larsson, S. D. Silvestrov,
\emph{Quasi-Lie algebras}, in "Noncommutative Geometry and
Representation Theory in Mathematical Physics", Contemp. Math.
\textbf{391}, Amer. Math. Soc., Providence, RI, 241--248 (2005).

\bibitem{LS3} 
D. Larsson, S. D. Silvestrov,
\emph{Quasi-deformations of $sl_2(\mathbb{F})$ using twisted
derivations}, Comm. Algebra \textbf{35}, 4303--4318 (2007).

\bibitem{li}
F. Li, \emph{The right braids, quasi-braided pre-tensor categories, and general Yang-Baxter operators}, 
Comm. Algebra \textbf{32}, 397--441 (2004). 

\bibitem{LiuKeQin} 
K. Q. Liu, 
\emph{Characterizations of the quantum Witt algebra}, Lett. Math. Phys.
\textbf{24}(4), 257--265 (1992).

\bibitem{Mak:Almeria}
A. Makhlouf, \emph{Paradigm of nonassociative Hom-algebras and Hom-superalgebras}, 
Proceedings of the ''Jordan Structures in Algebra and Analysis'' Meeting (Eds. 
J. Carmona Tapia, A. Morales Campoy, A. M. Peralta Pereira, M. I. Ramirez Ilvarez), 
Publishing House: Circulo Rojo, 145--177 (2010).

\bibitem{ms1}
A. Makhlouf, S. Silvestrov, \emph{Hom-algebra structures}, J. Gen. Lie Theory Appl. 
\textbf{2}(2), 51--64 (2008).

\bibitem{ms2}
A. Makhlouf, S. Silvestrov, \emph{Notes on formal deformations of Hom-associative and Hom-Lie algebras}, 
Forum Math. \textbf{22}(4), 715--759 (2010).

\bibitem{ms3}
A. Makhlouf, S. Silvestrov, \emph{Hom-Lie admissible Hom-coalgebras and Hom-Hopf algebras}, 
Published  as Chapter 17, pp.
189--206, in ''Generalized Lie theory in Mathematics, Physics and Beyond'' 
(Eds. S. Silvestrov, E. Paal, V. Abramov, A. Stolin), 
Springer-Verlag, Berlin (2008).

\bibitem{ms4}
A. Makhlouf, S. Silvestrov, \emph{Hom-algebras and Hom-coalgebras}, J. Algebra Appl. \textbf{9}(4), 
553--589 (2010). 

\bibitem{Sheng} 
Y. Sheng, \emph{Representations of {H}om-{L}ie algebras}, 
 Algebr. Represent. Theory  \textbf{15}(6), 1081--1098 (2012). 
 
\bibitem{Yau:EnvLieAlg} 
D. Yau,
\emph{Enveloping algebra of Hom-Lie algebras}, J. Gen. Lie Theory Appl.
\textbf{2}, 95--108 (2008).

\bibitem{yau1}
D. Yau, \emph{Module Hom-algebras}, e-Print 	arXiv:0812.4695 (2008).

\bibitem{yau2}
D. Yau, \emph{Hom-bialgebras and comodule Hom-algebras}, Int. E. J. Algebra. \textbf{8}, 45--64 (2010).

\bibitem{yau3}
D. Yau, \emph{Hom-algebras and homology}, J. Lie Theory \textbf{19}, 409--421 (2009).

\bibitem{homquantum1}
D. Yau, \emph{Hom-quantum groups I: Quasitriangular Hom-bialgebras},  J. Phys. A \textbf{45}(6), 
065203, 23 pp (2012).

\bibitem{homquantum2}
D. Yau, \emph{Hom-quantum groups II: Cobraided Hom-bialgebras and Hom-quantum geometry}, 
e-Print arXiv:0907.1880 (2009)

\bibitem{homquantum3}
D. Yau, \emph{Hom-quantum groups III: Representations and module Hom-algebras}, e-Print 
arXiv:0911.5402 (2009).

\bibitem{yauhomyb1}
D. Yau, \emph{Hom-Yang-Baxter equation, Hom-Lie algebras and quasitriangular bialgebras},  
J. Phys. A \textbf{42}(16), 165202, 12 pp  (2009).

\bibitem{yauhomyb2}
D. Yau, \emph{The Hom-Yang-Baxter equation and Hom-Lie algebras},  J. Math. Phys. \textbf{52}, 
053502 (2011).

\bibitem{yauhomyb3}
D. Yau, \emph{The classical Hom-Yang-Baxter equation and Hom-Lie bialgebras}, e-Print arXiv:0905.1890 (2009).

\end{thebibliography}
\end{document}